\documentclass[11pt,twoside]{amsart}
\usepackage{amsaddr}
\usepackage[english]{babel}

\usepackage{amssymb, palatino, xcolor}
\usepackage[all,cmtip]{xy}
\usepackage[colorinlistoftodos]{todonotes}
\usepackage{graphicx}
\usepackage[colorlinks=true, allcolors=blue]{hyperref}

\newtheorem{dfn}{Definition}[section]
\newtheorem{thm}[dfn]{Theorem}
\newtheorem{lem}[dfn]{Lemma}
\newtheorem{prop}[dfn]{Proposition}
\newtheorem{rem}[dfn]{Remark}
\newtheorem{example}[dfn]{Example}

\newenvironment{preuve}{{\em \bf Proof:}}{\hfill $\blacksquare$}

\def\P{\mathbb P}

\def\Q{\mathbb Q}
\def\R{\mathbb R}
\def\Z{\mathbb Z}
\def\H{\mathbb H}
\def\C{\mathbb C}

\definecolor{green}{rgb}{0,.6,0}

\title[Algebraic Toric Quasifolds]{Algebraic Toric Quasifolds}
\author[Fiammetta Battaglia and Elisa Prato]{Fiammetta Battaglia and Elisa Prato}
\address{Dipartimento di Matematica e Informatica "U. Dini"\\Universit\`a degli Studi di Firenze\\ Viale Morgagni 67/A\\ 50134 Firenze, ITALY\\fiammetta.battaglia@unifi.it\\ elisa.prato@unifi.it}
\thanks{The authors were partially supported by the 
PRIN Project ``Real and Complex Manifolds: Topology, Geometry and Holomorphic Dynamics" (MUR, Italy) and by GNSAGA (INdAM, Italy).}

\begin{document}

\begin{abstract} Symplectic and complex toric quasifolds are a generalization of toric manifolds and orbifolds  to the nonrational case. In this paper, we reframe these notions from the viewpoint of algebraic geometry.
\end{abstract}

\maketitle
\begin{small}
\noindent \textbf{Keywords.} Toric variety, toric quasifold, quasitorus, nonrational fan, Laurent monomial.

\medskip
\noindent \textbf{Mathematics~Subject~Classification:}
Primary: 14M25. Secondary: 58B20.
\end{small}

\section*{Introduction}
Toric quasifolds were first introduced by the second author in \cite{pcras, p} for the purpose of generalizing symplectic toric manifolds to the case of simple polytopes that are \textit{not necessarily rational}. Toric quasifolds are highly singular spaces that are locally modeled by open sets of $\R^{2n}$ modulo the action of countable groups; as such, they are a natural generalization of both toric manifolds and toric orbifolds. The work in the symplectic category
was shortly thereafter extended to the
complex category by both authors in \cite{cx}.
The aim of this paper is to reframe everything in the setting of algebraic geometry, following, as our main references and inspiration, the books by Fulton \cite{fulton} and Cox--Little--Schenck \cite{cox}. 

The foundation of the symplectic works \cite{pcras, p} consisted of establishing the right framework for the classical Delzant construction of symplectic toric manifolds \cite{delzant} to make sense in the nonrational case. Let us explain this in the language of fans. Let  $\Sigma$ be any simplicial fan in $\R^n$. If the fan is rational, the rays intersect a lattice $L$. In general, this will no longer be the case. However, one can always choose a set of ray generators $\{v_1,\ldots,v_d\}$ and consider their $\Z$--span, $Q$. This object is not necessarily a lattice; in the worst case scenario, it will be a dense subset of $\R^n$. It is an example of
\textit{quasilattice} (see Definition~\ref{quasilattice}). A more general way of proceeding is the following. Fix any quasilattice $Q$ in $\R^n$ first, and
then consider a simplicial fan $\Sigma$ whose ray generators all have nonempty intersection with $Q$.  Finally, choose a set of ray generators $\{v_1,\ldots,v_d\}$ belonging to $Q$. These choices define what 
we now call a \textit{fundamental triple}
$$(\Sigma, Q, \{v_1,\ldots,v_d\}).$$
Fundamental triples are the initial data required for the construction of all toric quasifolds: symplectic, complex, or algebraic. 

Once the right framework is established, it is quite surprising to see that, in all categories, many constructions have the same flavor as their rational counterparts. In our work \cite{cx}, for example, it had already emerged that complex toric quasifolds admit a canonical atlas with "affine" charts given by quotients of $\C^n$ modulo the linear action of countable subgroups $\Gamma$ of $(\C^*)^n$. This realization is necessarily the starting point for any algebraic formulation of the theory.

In this article, we build everything with the explicit aim of remaining as close as possible to the standard treatments of toric varieties \cite{fulton, cox}. In fact, we begin by carefully reviewing the classical constructions from our viewpoint, and we modify them only when needed. The key idea is that of replacing the lattice with the quasilattice $Q$, the primitive ray generators with the chosen ray generators
$v_1,\ldots,v_d\in Q$, and making the necessary changes along the way. This process yields, in a very natural way, the notion of \textit{affine toric quasifolds}, which turn out to be quotients of 
$\C^r\times(\C^*)^{n-r}$ modulo the action of the countable groups $\Gamma$ described above. As a special case, we get the \textit{quasitori} $(\C^*)^{n}/\Gamma$, which turn out to be isomorphic to the quotient $\C^n/Q$. 

The same idea applies to morphisms. We slightly modify the notion of cone morphism, only by requiring that the underlying linear map sends quasilattice to quasilattice. The induced affine toric morphism is then defined explicitly in terms of generalized Laurent monomials (see also \cite{laurent}). 
These monomials have real exponents, but in our quotients this is permissible (see Theorem~\ref{welldefined}). Verifying functoriality and equivariance with respect to quasitorus actions is a matter of direct computations (see Theorem~\ref{functoriality}, and Proposition~\ref{equivariance}).

Affine morphisms allow us to suitably glue affine toric quasifolds, yielding the notions of \textit{algebraic toric quasifolds} (see Theorem~\ref{gluing}) and their equivariant morphisms (see Theorem~\ref{toricmorphisms}). An interesting family of morphisms is provided by fan subdivisions and, in particular, blow--ups. We focus on the case of generalized Hirzebruch surfaces, $\H_a$, which form a one--parameter family of toric quasifolds containing the standard Hirzebruch surfaces $\H_n$ (see also \cite{hirze}). We show that they can be obtained by blowing--up generalized weighted projective space, in exactly the same way as in the classical case.

In fact, by the way that everything is constructed and generalized, it is straightforward to realize that, when the fan is rational, we obtain all of the standard theory back.

We remark that the nonsimplicial case was originally addressed by the first author in both the symplectic \cite{stratif-re} and the complex \cite{stratif-cx} setting and involves stratifications by symplectic and complex toric quasifolds, respectively. The algebraic point of view is work in progress and will appear in subsequent work.

Since the appearance of \cite{pcras,p,cx,stratif-re, stratif-cx}, nonrational toric geometry has been studied by a number of authors from different points of view. See, for example, \cite{bz1, rome, ratiu, HS, H, KLMV, IKP, boivin}.  Most of these treatments rely, in some way or another, on a choice of initial data that contain
a choice of ray generators and the
quasilattice given by their $\Z$--span. A discussion about how some of these different approaches are related can be found in \cite{whatis}.

The article is structured as follows. In Section~\ref{convex} we discuss prerequisites in convex geometry, in the rational and nonrational setting. In Section~\ref{affini} we introduce affine toric quasifolds and in Section~\ref{affinemorphism} their morphisms. In Section~\ref{toricquasifolds} we introduce algebraic toric quasifolds and in Section~\ref{morphisms} their morphisms and blow--ups. 

\section{Convex geometry}\label{convex}
\subsection{Cones and fans}
We recall a few facts about convex polyhedral cones; we refer to \cite{cox} and the references therein for a detailed treatment.

A \textit{convex polyhedral cone} in $\R^n$ is a set $$\rho=\left\{\sum_{i=1}^r t_i Y_i\mid t_i\in\R, t_i\geq0\right\}.$$
The vectors $Y_i\in\R^n$ are the \textit{generators} of the cone $\rho$. The cone $\{0\}$ is generated by the empty set. The \textit{dimension} of $\rho$ is the dimension of the linear subspace of $\R^n$ spanned by $\rho$.
The \textit{dual} of the convex polyhedral cone $\rho$ is given by
$$\rho^\vee=\{\alpha\in(\R^n)^*\mid \alpha(X)\geq0\quad\forall X\in\rho\}$$
and is itself a convex polyhedral cone. Moreover, $(\rho^\vee)^\vee=\rho$. 
Each non zero $\alpha\in(\R^n)^*$ defines a hyperplane $H_\alpha=\{X\in\R^n\;|\;\alpha(X)=0\}$ and a half-space $H^+_\alpha=\{X\in\R^n\;|\;\alpha(X)\geq0\}$. A hyperplane is said to be a \textit{supporting hyperplane} for a cone $\rho$ if $\rho\subset H_\alpha^+$. The intersection of $\rho$ with any supporting hyperplane, including the whole space, is called a \textit{face} of $\rho$. Faces that are strictly contained in $\rho$ are called \textit{proper}. Notice that a cone may contain a linear subspace, which is therefore contained in every face of the cone. For any face $\eta$, we can consider its \textit{relative interior}, $\text{relint}(\eta)$, which is the interior of $\eta$ in the linear subspace spanned by $\eta$ itself.   
A cone $\rho$ is said to be \textit{strongly convex} if it does not contain any line; this is equivalent to requiring that $\{0\}$ is a face of $\rho$.
In a strongly convex polyhedral cone the $1$--dimensional faces are all half--lines, the \textit{rays} of the cone. Given a strongly convex polyhedral cone $\rho$ with $r$ rays, and ray generators $v_1,\ldots,v_r$, we have that $\rho$ is generated by the $v_i$'s, namely $$\rho=\left\{\sum_{i=1}^rt_iv_i\mid t_i\in\R, t_i\geq0\right\}.$$
If, in addition, the vectors $v_1,\ldots,v_r$ are linearly independent, the cone $\rho$ has dimension $r$ and we will say that it
is \textit{simplicial}. In this case, for any subset $I_\eta=\{i_1,\ldots,i_h\}$ of $\{1,\ldots,r\}$, the corresponding vectors $v_{i_1},\ldots,v_{i_h}$ generate an $h$--dimensional face $\eta$ of the cone $\rho$, which is itself a simplicial cone, having dimension $h$. A maximal simplicial cone is a simplicial cone of dimension $n$.
Consider an $r$--dimensional simplicial cone $\rho$ as above and a basis $\{v_1,\ldots,v_r,v_{r+1},\ldots,v_n\}$ that completes the linearly independent set $\{v_1,\ldots,v_r\}$. If $\{\alpha_1,\ldots,\alpha_n\}$ is its dual basis, then the dual cone $\rho^{\vee}$ can be described, relatively to this basis, as the cone generated by $\alpha_i$, with $1\leq i\leq r$ and $\pm \alpha_i$ with $r+1\leq i\leq n$.
In particular, notice that when $\rho$ is maximal, $\rho^{\vee}$ is generated by 
$\{\alpha_1,\ldots,\alpha_n\}$. Thus we have that, for any $r$--dimensional simplicial cone, its dual is an $n$--dimensional cone that is strongly convex only when $\rho$ is maximal.

Let $L$ be the lattice in $\R^n$ that is generated by vectors $v_1,\ldots,v_n\in\R^n$:
$$L=\left\{\sum_{i=1}^{n}n_iv_i\mid n_i\in\Z\right\}. $$
The dual lattice $\text{Hom}(L,\Z)$ is isomorphic to
$$L^{\vee}=\{\alpha\in(\R^n)^*\;|\;\alpha(X)\in\Z\quad\forall\quad X\in L\}.$$ Notice that the dual basis $\alpha_1,\ldots,\alpha_n$ to $v_1,\ldots,v_n$ is a basis of $L^{\vee}$.

A \textit{fan} $\Sigma$ in $\R^n$ is a finite collection of strongly convex polyhedral cones in $\R^n$, such that each face of a cone in $\Sigma$ belongs to $\Sigma$ and the intersection of any two cones of $\Sigma$ is a face of each. The one--dimensional cones are called \textit{rays} of the fan.

A fan is \textit{simplicial} if its cones are all simplicial.

The \textit{support} $|\Sigma|$ of a fan $\Sigma$ is the union in $\R^n$ of its cones. From now on, we will consider fans
that have convex support of full dimension; this implies that $|\Sigma|$ is the union of the maximal cones of $\Sigma$. 
A notable example is the case of a \textit{complete} fan, 
namely a fan whose support is the whole space $\R^n$ like, for instance, the normal fan 
of a convex polytope. 

\subsection{Rational and smooth cones and fans}
Given a lattice $L$, a strongly convex polyhedral cone $\rho$ is \textit{rational} with respect to $L$, or in $L$, if each of its rays has a nonempty intersection with $L$. Thus, an $r$--dimensional rational cone $\rho$ determines a set of $r$ canonical generators. In fact, the intersection of each ray with the given lattice $L$ is nonempty and a discrete subset of the ray; therefore, the shortest vector in this intersection is well defined and is called \textit{the primitive generator} of the ray. 
A fan is \textit{rational} with respect to $L$, or in $L$, if every cone of the fan is rational in $L$.

A simplicial cone $\rho$ that is rational in $L$ is said to be \textit{smooth} if its canonical set of generators can be extended to a basis of $L$. A simplicial fan that is rational in $L$ is said to be \textit{smooth} if each of its cones is smooth.  

\subsection{Nonrational cones and fans} 
\noindent
In this subsection, we build the
framework for toric geometry in the nonrational setting.
We begin by recalling from \cite{pcras, p} the following
\begin{dfn}[Quasilattice]\label{quasilattice}
\rm{A \textit{quasilattice} $Q$ in $\R^n$ is the $\Z$--span of a set of generators of $\R^n$.}
\end{dfn}
A quasilattice in $\R^n$ is therefore a free $\Z$--module of rank greater or equal to $n$. The key feature that distinguishes a true lattice from a quasilattice of rank strictly greater than $n$ is of topological nature: a quasilattice in $\R^n$ has rank equal to $n$ if and only if it is a discrete subset of $\R^n$. 
A basic, though important, example of quasilattice is $Q=\Z+a\Z$, with $a$ an irrational number; it is dense in $\R$. 
\begin{dfn}[Quasirational cone]
\rm{A strongly convex polyhedral cone $\delta$ in $\R^n$ is \textit{quasirational} with respect to a quasilattice $Q$, or in $Q$, if each of its rays has nonempty intersection with $Q$.}
\end{dfn}
\begin{rem}\label{duali}
 \rm{Notice that, unlike the rational case,
 here we no longer have a canonical choice of generators, since there might be a ray whose intersection with $Q$ is not a discrete subset of the ray. Hence, the notion of primitive ray generator does not make sense.
 There are other significant changes with respect to the rational case. To begin with, Gordan's Lemma does not hold. Consider, for example, the one--dimensional cone 
$\sigma=\R_{\geq 0}$ in $\R$. Its intersection with the quasilattice $\Z+\sqrt{2}\,\Z$ is a semigroup that is dense in $\sigma$, whilst if it were finitely generated as a semigroup, it would be discrete. Another remarkable difference is that $$Q^\vee=\{\alpha\in(\R^n)^*\mid\alpha(X)\in\Z,\;\forall\;X\in Q\}$$ in general is a lattice in $(\R^n)^*$ whose rank is lower than $n$. In particular, when $Q$ is dense in $\R^n$, $Q^\vee=0$ . On the other hand, the rank of the free $\Z$--module $\text{Hom}(Q,\Z)$ is equal to $\text{rank}(Q)\geq n$, therefore in general  $\text{Hom}(Q,\Z)$ can no longer be identified with  $Q^\vee$.}
\end{rem}
\begin{dfn}[Quasirational fan]
 \rm{
A fan $\Sigma$ in $\R^n$ is \textit{quasirational} with respect to a quasilattice $Q$, or in $Q$, if every cone of $\Sigma$ is quasirational in $Q$.
}
\end{dfn}
\begin{rem}
\rm{It is very important to notice that a cone, or a fan, is always quasirational with respect to some quasilattice $Q$. It suffices to take the quasilattice that is generated by a choice of ray generators. So, unlike rationality,  quasirationality, perse, is not restrictive at all.}
\end{rem}
\begin{rem}\label{base}
\rm{Let $\delta$ be an $r$--dimensional simplicial cone in $\R^n$ that is quasirational in $Q$. Then, since $Q$ is generated by a set of spanning vectors of $\R^n$, by a standard linear algebra argument, we can always choose $v_1^\delta,\ldots,v_n^\delta$ in $Q$ which form a basis of $\R^n$ and $I_\delta=\{i_1,\ldots,i_r\}\subseteq \{1,\ldots,n\}$ such that $v_{i_1}^\delta,\ldots,v_{i_r}^\delta$ are ray generators of $\delta$. Notice that this implies that we can always construct a simplicial maximal cone $\sigma$ that is quasirational in $Q$ such that $\delta$ is a face of $\sigma$. }
\end{rem}
We adapt to this context the notion of fundamental triple.
\begin{dfn}[Cone fundamental triple]
\rm{
Let $Q\subset\R^n$ be a quasilattice. For any simplicial cone $\delta$ in $\R^n$ that is
quasirational in $Q$, a \textit{fundamental triple} is given by $$(\delta, Q, \{v_1^\delta,\ldots,v_n^\delta\})$$ where $v_1^\delta,\ldots,v_n^\delta$ are vectors chosen as in Remark \ref{base} above.}
\end{dfn}
Notice that there is a lattice 
which is
naturally associated with such a triple. It is the lattice $L_\delta$ generated by $v_1^\delta,\ldots,v_n^\delta$. With respect to this lattice, $\delta$ is a smooth cone in the classical sense.
\begin{rem}[Induced triple on a cone face]
\rm{
Consider a cone $\delta$ with fundamental triple 
$(\delta, Q, \{v^\delta_1,\ldots,v^\delta_n\}).$ Whenever we have a face $\rho\subset\delta$, we will consider the naturally induced triple
$(\rho, Q, \{v^\delta_1,\ldots,v^\delta_n\})$
for $\rho$.
Notice that $I_\rho\subseteq I_\delta$ and $L_\rho=L_\delta$.}
\end{rem}
\begin{dfn}[Fan fundamental triple]\label{triple}
\rm{
Let $\Sigma$ be a simplicial fan in $\R^n$ that is quasirational in $Q$. A \textit{fundamental triple} for
$\Sigma$ is given by
$$(\Sigma, Q, \{v_1,\ldots,v_d\}),$$ where the vectors $v_1,\ldots,v_d$ belong to $Q$ and generate the $d$ rays of $\Sigma$.}
\end{dfn}

Notice that for the maximal cones of the fan, the corresponding triples are uniquely determined by the fan triple. The lower dimensional cones may be faces of different
maximal cones and, as such, will be endowed with different induced triples.
A fan triple yields a collection of cone triples. If there is a unique lattice associated with each of them, then the fan is smooth with respect to this lattice in the classical sense.

Classically, given a lattice $L$ in $\R^n$,
and a rational simplicial fan $\Sigma$, there is an underlying fundamental triple which is given by 
$$(\Sigma, L, \{v_1,\ldots,v_d\}),$$
where the vectors $v_1,\ldots,v_d$ are the primitive ray generators for $\Sigma$.

\subsection{Cone and fan morphisms}
The notions of cone and fan morphism have to be adapted to the nonrational setting in the following natural way.
Consider a cone $\delta_1\subset\R^n$ that
is quasirational in a quasilattice $Q_1\subset \R^n$, and a cone
$\delta_2\subset\R^m$ that
is quasirational in a quasilattice $Q_2\subset \R^m$.
\begin{dfn}[Cone morphism]
\rm{ 
A \textit{cone morphism} $\delta_1\rightarrow \delta_2$
is the datum of a linear mapping $f\colon\R^n\rightarrow\R^m$  such that
$f(Q_1)\subset Q_2$ and
$f(\delta_1)\subset\delta_2$.
}
\end{dfn}
The identity on $\R^n$ defines 
a morphism of any cone
 $\delta\subset \R^n$ into itself
which we call the \textit{identity morphism}.
Two cone morphisms can be naturally
composed by taking the composition of the corresponding linear mappings. Moreover, we have
\begin{dfn}[Cone isomorphism]
\rm{A cone \textit{isomorphism}  $\delta_1\rightarrow \delta_2$ is the datum of a linear isomorphism $f$ of $\R^n$ such that 
$f(Q_1)= Q_2$ and $f(\delta_1)=\delta_2$.
The two cones are then said to be \textit{isomorphic}.}
\end{dfn}

Let $\Sigma_1\subset \R^n$ and $\Sigma_2\subset \R^m$ be fans that are quasirational, respectively, in quasilattices $Q_1\subset \R^n$ and $Q_2\subset \R^m$.
\begin{dfn}[Fan morphism]
\rm{A \textit{fan morphism} 
$\Sigma_1\rightarrow\Sigma_2$
is the datum of a linear mapping $f\colon\R^n\rightarrow\R^m$ with 
$f(Q_1)\subset Q_2$, and such that,
for each cone $\delta_1$ in $\Sigma_1$, there is a cone $\delta_2$ in $\Sigma_2$, with the property that $f(\delta_1)\subset \delta_2$.
}
 \end{dfn} 

As in the case of cones, the identity 
of $\R^n$ induces a morphism
of any  
$\Sigma\subset\R^n$,
into itself, called the \textit{identity morphism}, and
two fan morphisms can be composed by
taking the composition of the
corresponding linear maps.
We also have 
\begin{dfn}[Fan isomorphism]
 \rm{A \textit{fan isomorphism} $\Sigma_1\rightarrow\Sigma_2$
 is the datum of an isomorphism $f$ of $\R^n$ with 
$f(Q_1)= Q_2$, and such that,
for each cone $\delta_1$ in $\Sigma_1$, there is a cone $\delta_2$ in $\Sigma_2$, with the property that $f(\delta_1)= \delta_2$. The two fans are then said
 to be \textit{isomorphic}.}
\end{dfn}

\section[quasi affini]{Affine toric quasifolds}\label{affini}
In this section, we introduce affine toric quasifolds. We do so in stages, by first reviewing, from our perspective, what happens in the rational case.
\subsection{Smooth affine toric varieties}
We recall the classical construction of affine toric varieties corresponding to smooth cones. 

To begin with, let $\sigma$ be a maximal simplicial cone in $\R^n$ and let $v_1^\sigma,\ldots,v_n^\sigma$ be ray generators for $\sigma$.
Denote by $L_\sigma$ the lattice
in $\R^n$ that is generated by these vectors.
Then $\sigma$ is smooth with respect to
$L_\sigma$. Notice that our point of view here is slightly different from the classical one, where the
lattice is given from the beginning, and
the cone is required to be smooth with respect to that lattice. As we shall see, our approach is better suited to being generalized to the nonrational case. 

The intersection $$S_{\sigma}=\sigma^{\vee}\cap L_\sigma^{\vee}$$ is a commutative semigroup with $0$ that is generated by the basis $\alpha_1^\sigma,\ldots,\alpha_n^\sigma$ that is dual to the basis $v_1^\sigma,\ldots,v_n^\sigma$. 
The corresponding semigroup algebra $\C[S_\sigma]$ is  the set of finite formal sums $\sum_{\alpha\in S_\sigma}c_\alpha\chi^{\alpha}$ with $c_\alpha\in \C$ and multiplication rule given by $\chi^\alpha\cdot\chi^{\alpha'}=\chi^{\alpha+\alpha'}$,
$\alpha, \alpha'\in S_\sigma$. Note that any $\alpha\in S_\sigma$ is of the form $h_1\alpha_1^\sigma+\cdots+h_n\alpha_n^\sigma$, with $h_1,\ldots,h_n$ nonnegative integers. 
Therefore, if we denote $\xi_i=\chi^{\alpha_i}$, $i=1,\ldots,n$, we obtain $\C[S_\sigma]=\C[\xi_1,\ldots,\xi_n]$, the ring of polynomials in $\xi_1,\ldots,\xi_n$ with complex coefficients. The associated affine toric variety is the set of all maximal ideals of $\C[S_\sigma]$, that we denote for brevity $\text{Spec}\,\C[S_\sigma]$ instead of $\text{Specm}\,\C[S_\sigma]$. By identifying the maximal ideal $\langle \xi_1-z_1,\ldots,\xi_n-z_n\rangle$ with the point $(z_1,\ldots,z_n)\in\C^n$, we obtain that the affine toric variety is $\C^n$ with coordinate ring $\C[S_\sigma]=\C[\xi_1,\ldots,\xi_n]$. 

Take now any $r$--dimensional simplicial cone $\delta$ in $\R^n$. Choose a basis $v_1^\delta,\ldots,v_n^\delta$
of $\R^n$ such that  $v_1^\delta,\ldots,v_r^\delta$ are 
ray generators of $\delta$ and
consider the lattice $L_\delta$ that is generated by $v_1^\delta,\ldots,v_n^\delta$.
The cone $\delta$ is smooth with respect to
$L_\delta$.
Consider now the semigroup
$$S_{\delta}=\delta^{\vee}\cap L_\delta^{\vee}.$$
Then, taking the basis $\{\alpha_1^\delta,\ldots,\alpha_n^\delta\}$ dual to the basis $\{v_1^\delta,\ldots,v_n^\delta\}$, 
we have
$$\begin{array}{ccl}\C[S_{\delta}]&=&\C[\chi^{\alpha_1^\delta},\ldots,\chi^{\alpha_r^\delta},\chi^{\alpha_{r+1}^\delta}, \chi^{-\alpha_{r+1}^\delta},\ldots,\chi^{\alpha_{n}^\delta},\chi^{-\alpha_{n}^\delta}]\\&=&\C[\xi_1,\ldots,\xi_r,\xi_{r+1},\xi_{r+1}^{-1},\ldots,\xi_n,\xi_n^{-1}],\end{array}$$ which is a subalgebra of the algebra of all Laurent polynomials in $\xi_1,\ldots,\xi_n$. 
Consider the ideal $$J=\langle z_{r+1}z_{r+2}-1,\ldots,z_{2n-r-1}z_{2n-r}-1 \rangle$$ in $\C[z_1,\ldots,z_{2n-r}]$. Then $\C[S_{\delta}]$ is isomorphic to $\C[z_1,\ldots,z_{2n-r}]/J$  and the affine toric variety 
$\text{Spec}\,\C[z_1,\ldots,z_{2n-r}]/J$ corresponding to $\delta$ can be identified with the set of zeros $$V(J)=\{(z_1,\ldots,z_{2n-r})\in\C^{2n-r}\mid z_{r+1}z_{r+2}=\cdots=z_{2n-r-1}z_{2n-r}=1\}.$$ In turn, this can be identified with $\C^r\times(\C^*)^{n-r}$ by the projection from $\C^r\times\C^{2n-2r}$ onto $\C^r\times(\C^*)^{n-r}$ that is the identity on $\C^r$ and on $\C^{2n-2r}$ is given by $$\begin{array}{ccc}\C^{2n-2r}&\longrightarrow&(\C^*)^{n-r}\\
  (z_{r+1},z_{r+2},z_{r+3}\ldots,z_{2n-r-1},z_{2n-r})&\longmapsto&
  (z_{r+1},z_{r+3},\ldots,z_{2n-r-1})\end{array}.$$ Notice that, in particular, to the $0$--dimensional cone $\{0\}$ there corresponds $\C\left[S_{\{0\}}\right]=\C\left[L_{\{0\}}^\vee\right]$, generated 
  by $\left\{\chi^{\alpha_1^{\{0\}}},\ldots,\chi^{\alpha_n^{\{0\}}},\chi^{-\alpha_1^{\{0\}}},\ldots,\chi^{-\alpha_n^{\{0\}}}\right\}$, where $\left\{\alpha_1^{\{0\}},\dots,
  \alpha_n^{\{0\}}\right\}$ is now a generic basis of  $(\R^n)^*$.
  This, with the above identifications, yields the torus $(\C^*)^n$. 

  We illustrate these constructions with two examples.
\begin{example}\label{quasispheremodel}
\rm{
Consider the maximal cone $\sigma=\R_{\geq 0}\subset\R$. Choose the generator $v_1=a$, with $a$ any positive real number, and take the corresponding lattice $L_\sigma=a\Z$. The
dual basis element $\alpha_1^\sigma=\frac{1}{a}e_1^*$
generates the dual cone $\sigma^{\vee}$ and the dual lattice $L_\sigma^{\vee}$ in $\R^*$.
Consider the semigroup $$S_{\sigma}=\sigma^{\vee}\cap L_\sigma^{\vee}=\left\{\frac{h}{a}e_1^*\mid h\in\Z,h\geq0\right\}.$$ The algebra $\C[S_\sigma]$ is given by $\C[\xi_1]$ and the corresponding affine toric variety is $\C$.
Take now the face $\{0\}$.
Then $S_{\{0\}}=L_\sigma^{\vee}$,
$\C[S_{\{0\}}]=\C[\xi_1, \xi_1^{-1}]$
and the corresponding affine toric variety is $\C^*$.}
\end{example}
\begin{example}\label{cono proiettivo} 
\rm{
Take the maximal cone $\tau$ in $\R^2$ generated by the vectors $v_1=(1,0), v_2=(-1,-a)$, with $a$ a positive
real number (see Figure~\ref{trianglecone}). 
\begin{figure}[h]
\begin{center}
\includegraphics{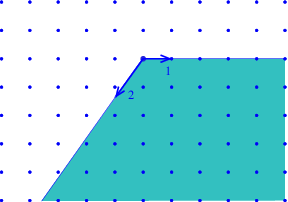}
\caption{The maximal cone $\tau$.}
\label{trianglecone}
\end{center}
\end{figure}
Consider the generated lattice $L_\tau=\Z\times a\Z$.
The dual cone $\tau^{\vee}$ and the dual lattice $L_\tau^{\vee}$ in $(\R^2)^*$ are generated by the vectors of the dual basis $\alpha_1^\tau=e_1^*-\frac{1}{a}e_2^*, \alpha_2^\tau=-\frac{1}{a}e_2^*$. Consider the semigroup $$S_{\tau}=\tau^{\vee}\cap L_\tau^{\vee}=\left\{h\left(e_1^*-\frac{1}{a}e_2^*\right)-\frac{k}{a}e_2^*\mid h,k\in\Z,h,k\geq0\right\}.$$ The algebra $\C[S_\tau]$ is given by $\C[\xi_1,\xi_2]$ and the corresponding affine toric variety is $\C^2$.
Now take the face $\delta$ generated by $v_1$. Together with $v_2$ they form a basis
of $\R^2$ such that  $L_\delta=L_\tau$. The corresponding semigroup is $$\begin{array}{ll}S_{\delta}\!\!\!&=\delta^{\vee}\cap L_\tau^{\vee}\\&=\{h\left(e_1^*-\frac{1}{a}e_2^*\right)-\frac{k}{a}e_2^*\mid h,k\in\Z,\, \left(h\left(e_1^*-\frac{1}{a}e_2^*\right)-\frac{k}{a}e_2^*\right)(e_1) \geq 0\}\\ &=\{h\left(e_1^*-\frac{1}{a}e_2^*\right)-\frac{k}{a}e_2^*\mid h,k\in\Z,\,h\geq0\}.\end{array}$$ Thus the semigroup algebra $\C[S_{\delta}]$ is given by $\C[\xi_1,\xi_2,\xi_2^{-1}]$, which is the coordinate ring of the affine toric variety $\C\times\C^*$.}
\end{example}

\subsection{Finite quotient singularities}\label{finite}
Let us go back to a general maximal simplicial cone $\sigma$ with ray generators given by $v_1^\sigma,\ldots,v_n^\sigma$.
We now consider new
ray generators $\frac{v^\sigma_1}{h_1},\ldots,\frac{v^\sigma_1}{h_n}$ for $\sigma$,
with $h_1,\ldots,h_n$ positive integers; they span a new lattice $\widetilde{L}_\sigma$ of which $L_{\sigma}$ is a sublattice having the same rank. The vectors $v_1^\sigma,\dots,v_n^\sigma$ are no longer primitive in $\widetilde{L}_\sigma$. 
Let us recall the intrinsic procedure described in \cite[p.~34]{fulton}, which yields the
affine toric variety (and its coordinate ring) corresponding
to the cone $\sigma$ with respect to the new lattice $\widetilde{L}_\sigma$. We do this
because, as we will see, this procedure extends to the nonrational case naturally, by replacing the larger lattice $\widetilde{L}_\sigma$ with a quasilattice $Q$.

Consider the canonical duality pairing
$$L_\sigma ^{\vee}/\widetilde{L}_\sigma^{\vee}\times \widetilde{L}_\sigma/L_\sigma\rightarrow \Q/\Z \hookrightarrow \C^*$$ that takes
$([\alpha],[v])$ to $e^{-2\pi i\alpha(v)}$,
 for $\alpha \in L_\sigma ^{\vee}, v\in \widetilde{L}_\sigma$.
This induces an action of the finite group  $\Gamma_\sigma=\widetilde{L}_\sigma/L_\sigma$ on $\C[S_{\sigma}]$:
$$[v]\cdot\chi^\alpha=e^{-2\pi i\alpha(v)}\chi^\alpha,\quad \alpha\in S_{\sigma}, v\in \widetilde{L}_\sigma.$$  Then, given the semigroup
$$\widetilde{S}_{\sigma}=\sigma^{\vee}\cap \widetilde{L}_\sigma^{\vee},$$
we have that
$$\C[\widetilde{S}_\sigma]=\C[S_\sigma]^{\Gamma_\sigma},$$
the latter being the ring of invariants.
Moreover, there is an induced action of $\Gamma_\sigma$
on $\text{Spec}\,\C[S_{\sigma}]$, and consequently on $\C^n$, defined as follows: on the generators of the maximal ideals we have, for $v\in\widetilde{L}_\sigma$:
$$\begin{array}{r}[v]\cdot(\chi^{\alpha_j^\sigma}-z_j\chi^0)=e^{-2\pi i\alpha_j^\sigma(v)}\chi^{\alpha_j}-z_j\chi^0=\\=e^{-2\pi i\alpha_j^\sigma(v)}(\chi^{\alpha_j^\sigma}-e^{2\pi i\alpha_j^\sigma(v)}z_j\chi^0).\end{array}$$
Therefore, the action of $\Gamma_\sigma$ on $\C^n$ is given by $$[v]\cdot(z_1,\ldots,z_n)=(e^{2\pi i\alpha_1^\sigma(v)}z_1,\ldots,e^{2\pi i\alpha_n^\sigma(v)}z_n),\quad v\in \widetilde{L}_\sigma.$$ Finally, the affine toric variety corresponding to the cone $\sigma$ relative to the lattice $\widetilde{L}_\sigma$ is given by the quotient $\C^n/\Gamma_\sigma$.
Something analogous happens for nonmaximal simplicial cones $\delta$. We will treat this case below, directly in the nonrational setting.

\subsection{The nonrational case}
We are now ready to replace, as announced, the lattice $\widetilde{L}_\sigma$ with a quasilattice
$Q$ and to introduce the notion
of \textit{affine toric quasifold}. We do this as follows. Consider first a triple 
$(\sigma, Q, \{v_1^\sigma, \ldots, v_n^\sigma\})$,
with $\sigma$ maximal. Then $L_\sigma\subset Q$
and $Q^{\vee}\subset L_\sigma^{\vee}$.
Formally, everything goes as above.
Namely, there is a canonical duality pairing  
$$L_\sigma ^{\vee}/Q^{\vee}\times Q/L_\sigma\rightarrow \R/\Z \hookrightarrow \C^*$$ which again takes
$([\alpha],[v])$ to $e^{-2\pi i\alpha(v)}$,
for $\alpha \in L_\sigma^{\vee}, v\in Q$. The now countable group  $\Gamma_\sigma=Q/L_\sigma$ acts on $\C[S_{\sigma}]$ as above:
$$[v]\cdot\chi^\alpha=e^{-2\pi i\alpha(v)}\chi^\alpha, \quad \alpha\in S_{\sigma}, v\in Q$$
(notice that if $\alpha\in Q^{\vee}$, then $\chi^{\alpha}$
is invariant). We thus have an induced
action of $\Gamma_\sigma$ on $\C^n$, which is again given by $$[v]\cdot(z_1,\ldots,z_n)=(e^{2\pi i\alpha_1^\sigma(v)}z_1,\ldots,e^{2\pi i\alpha_n^\sigma(v)}z_n),\quad v\in Q.$$
In analogy with the rational case, we are now
ready to define affine toric quasifolds,
for maximal simplicial cones.

\begin{dfn}\rm{Given a triple
$$\{\sigma,Q,\{v_1^\sigma,\ldots,v_n^\sigma\}\},$$ with
$\sigma$ a maximal simplicial cone, we define 
the corresponding \textit{affine toric quasifold} to be the quotient
$$X_{\sigma}=\C^n/\Gamma_\sigma.$$ We will say that $n$ is the \textit{dimension} of $X_{\sigma}$.}
\end{dfn}
Let us now consider the case of a general
simplicial cone $\delta$ of dimension $r$ and take a triple 
$(\delta, Q, \{v_1^\delta, \ldots, v_n^\delta)\}$. For simplicity, we assume $\delta$ to be generated by $v_1^\delta,\ldots,v_r^\delta$. Then the action of $\Gamma_\delta=Q/L_\delta$ on $\C[S_{\delta}]$ yields the following action
on the isomorphic ring $\C[z_1,\ldots,z_{2n-r}]/J$: let $[v]\in\Gamma_\delta$, then $$[v]\cdot z_h=\left\{\begin{array}{ll}e^{-2\pi i\alpha_h^\delta(v)}z_h,&  h=1,\ldots,r\\ 
e^{-2\pi i\alpha_h^\delta(v)}z_{r+2k+1},&  k=0,\ldots,n-r-1\\ 
e^{2\pi i\alpha_h^\delta(v)}z_{r+2k} & k=1,\ldots,n-r.\end{array}\right.$$
Keeping track of all the identifications, the action of $\Gamma_\delta$ is just the restriction to $\C^r\times(\C^*)^{n-r}$ of the action on $\C^n$ given by
$$[v]\cdot(z_1,\ldots,z_n)=(e^{2\pi i\alpha_1^\delta(v)}z_1,\ldots,e^{2\pi i\alpha_n^\delta(v)}z_n), \quad v\in Q.$$
\begin{dfn} 
\rm{Given a triple
$\{\delta,Q,\{v_1^\delta,\ldots,v_n^\delta\}\}$, with
$\delta$ an $r$--dimensional simplicial cone, we define the corresponding \textit{affine toric quasifold} to be the quotient
$$X_{\delta}=(\C^r\times(\C^*)^{n-r})/\Gamma_\delta.$$
We will say that $n$ is the \textit{dimension} of $X_{\delta}$.}
\end{dfn}
\begin{rem}\label{facce}
\rm{
When $\rho$ is a face of $\delta$, 
we will denote
the affine quasifold corresponding to the induced triple $\{\rho,Q,\{v_1^\delta,\ldots,v_n^\delta\}\}$ by $X_\rho^\delta$, so as not to lose track of the ambient cone $\delta$.}
\end{rem}
For some applications, it will be useful to
have an alternate description of the
group $\Gamma_\delta$.
\begin{lem}\label{gruppino}
Consider a triple
$(\delta,Q,\{v_1^\delta,\ldots,v_n^\delta\})$ and the simplex
$$\Delta_\delta=\left\{ \sum_{i=1}^n s_i v_i^\delta \mid 0\leq s_i<1\right\}.$$
Then we have
$$\Gamma_\delta=p( Q\cap \Delta_\delta),
$$
where $p$ is the projection
$$
\begin{array}{ccl}
&Q\rightarrow &\Gamma_\delta\\
& v \mapsto &[v].
\end{array}
$$
\end{lem}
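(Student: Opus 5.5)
The claim is that every coset of $L_\delta$ in $Q$ has a representative in the half-open fundamental parallelepiped $\Delta_\delta$ of the lattice $L_\delta$. Since $p(Q\cap\Delta_\delta)\subseteq\Gamma_\delta$ trivially, it suffices to prove that $p$ restricted to $Q\cap\Delta_\delta$ is surjective onto $\Gamma_\delta=Q/L_\delta$. In fact I would show slightly more: the restriction is a bijection.

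\emph{Surjectivity.} Take any $v\in Q$. Since $v_1^\delta,\ldots,v_n^\delta$ is a basis of $\R^n$, write $v=\sum_{i=1}^n t_i v_i^\delta$ with $t_i\in\R$; explicitly $t_i=\alpha_i^\delta(v)$, using the dual basis. Decompose each coefficient into integer and fractional parts, $t_i=\lfloor t_i\rfloor+s_i$ with $0\le s_i<1$. Set
$$w=\sum_{i=1}^n\lfloor t_i\rfloor v_i^\delta\in L_\delta, \qquad u=v-w=\sum_{i=1}^n s_i v_i^\delta .$$
Then $u\in\Delta_\delta$ by construction. Moreover $u\in Q$, because $v\in Q$ and $w\in L_\delta\subset Q$; the inclusion $L_\delta\subset Q$ holds since the $v_i^\delta$ were chosen in $Q$ (Remark~\ref{base}) and $Q$ is a group. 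Hence $u\in Q\cap\Delta_\delta$ and $p(u)=[v]$, which proves the claimed equality.

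\emph{Injectivity.} If $u,u'\in Q\cap\Delta_\delta$ satisfy $[u]=[u']$, then $u-u'\in L_\delta$, so its coordinates $s_i-s_i'$ are integers. Since they lie in $(-1,1)$, they vanish, and so $u=u'$.

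There is no real obstacle here. The only point to state carefully is that $L_\delta\subset Q$ and $Q$ is closed under subtraction, so the reduction modulo $L_\delta$ stays inside $Q$. Nonrationality plays no role: $Q\cap\Delta_\delta$ may be countably infinite, and the argument does not use discreteness anywhere.
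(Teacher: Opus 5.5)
Your proof is correct and follows the paper's argument: write $v$ in the basis $v_1^\delta,\ldots,v_n^\delta$, subtract the integer parts (an element of $L_\delta\subset Q$), and note that the fractional-part vector lies in $Q\cap\Delta_\delta$ and has the same class as $v$. Your injectivity step, showing that $p$ restricted to $Q\cap\Delta_\delta$ is a bijection onto $\Gamma_\delta$, goes beyond what the lemma states, and it is also correct.
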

\noindent\begin{preuve}
Take any $v\in Q$ and write it as $v=\sum_{i=1}^n a_i v_i^\delta$ (note that $a_i=\alpha_i^{\sigma}(v)$). 
Now decompose each $a_i$ as the sum of its integer part
and its fractional part:
$a_i=\lfloor a_i \rfloor +\{a_i\}$. Then
$p(v)=p(u)$, with $u=\sum_{i=1}^n \{ a_i \} v_i^\delta\in Q\cap \Delta_\delta$.
\end{preuve}

We illustrate the notion of affine toric quasifold with two important examples.
\begin{example}
\label{quasispheremodeltwo}
\rm{In Example~\ref{quasispheremodel}, take the quasilattice $Q=\Z+a\Z\supset a\Z=L_\sigma$. The affine toric quasifold associated with the triple
$(\sigma,Q,\{a\})$ is given by $X_\sigma=\C/\Gamma_\sigma$,
where $$\Gamma_\sigma=
\left\{ \left[\frac{h}{a}v_1\right]\mid h\in \Z\right\}$$ acts on $\C$ by 
rotations of angle $\frac{1}{a}$:
$$\left[\frac{h}{a}v_1\right]\cdot z=e^{2\pi i \frac{h}{a}}z.$$
Similarly, $X_{\{0\}}^\sigma=\C^*/\Gamma_\sigma$.
Notice that, when $a=1$, we get $X_\sigma=\C$, whilst if $a$ equals
a positive integer $n$ we get $X_\sigma=\C/\Z_n$, which is isomorphic to $\C$.}
\end{example}
Notice that, strictly speaking, in accordance with Lemma~\ref{gruppino}, the fraction $\frac 1a$ in Example~\ref{quasispheremodeltwo} above can be
replaced by its fractional part $\left\{\frac 1a \right\}$. We are, in fact, rotating by an angle $\left\{\frac 1a \right\}$. However, we will avoid doing so here, and in the following examples, as the notation is cumbersome.
\begin{example}\label{cono proiettivo 2}
\rm{
Consider the cone $\tau$ in Example~\ref{cono proiettivo}, and take 
the quasilattice
$$Q=\Z\times(\Z+a\Z)\supset \Z\times a\Z=L_\tau.$$ 
The affine toric quasifold corresponding to the triple $(\tau,Q,\{v_1,v_2\})$ is  $X_\tau=\C^2/\Gamma_\tau$ and the one corresponding to the face $\delta$ is $X_\delta^\tau=(\C\times\C^*)/\Gamma_\tau$, where
$$\Gamma_\tau=\left\{\ \left[\frac{h}{a}(v_1+v_2)\right] \mid h\in \Z\right\}$$
acts on $\mathbb{C}^2$ as follows
$$\left[\frac{h}{a}(v_1+v_2)\right]\cdot(z_1,z_2)=\left(e^{2\pi i \frac{h}{a}}z_1,e^{2\pi i \frac{h}{a}}z_2\right).$$
When $a=1$ the group $\Gamma_\tau$ is trivial and $X_\tau=\C^n$.
On the other hand, when $a$ is a positive integer $n$, then $\Gamma_\tau= \Z_n$. In this case,
$Q^\vee=(\Z^2)^\vee\simeq\Z^2\subset L^{\vee}=\Z\times\frac{1}{n}\Z$ and $X_\tau$ is an orbifold with singularity of order $n$. It is the affine variety corresponding to the singular cone in ordinary weighted projective space
$\C\P^2_{(1,1,n)}$.}
\end{example}
\begin{rem}
\rm{We have seen in Subsection~\ref{finite} that a smooth maximal simplicial cone $\sigma$, with ray generators given by $v_1^\sigma,\ldots,v_n^\sigma$, gives rise to the coordinate ring 
$\C[S_\sigma].$ When we consider new ray generators
$\frac{v^\sigma_1}{h_1},\ldots,\frac{v^\sigma_1}{h_n}$
with $h_1,\ldots,h_n$ positive integers, and therefore a new lattice
$\widetilde{L}_\sigma$ containing $L_{\sigma}$, then its coordinate ring is the ring of invariants
$\C[\widetilde{S}_\sigma]=\C[S_\sigma]^{\Gamma_\sigma},$ where $\Gamma_\sigma=\widetilde{L}_\sigma/L_\sigma$.

When we replace the lattice
$\widetilde{L}_\sigma$ with a quasilattice $Q$, the change is dramatic. As we have seen in Remark~\ref{duali}, the quasilattice $\text{Hom}(Q,\Z)$ can no longer be identified with $Q^\vee$. The latter, as a matter of fact, is a sublattice of $L^\vee_\sigma$. Even if we had the notion of a dual quasilattice $Q^*$ sitting in $(\R^n)^*$, the semigroup $\sigma^\vee\cap Q^*$, as we have seen, would be not finitely generated. Therefore, while the classical construction of an affine variety with finite quotient singularities leads very naturally to that of an affine toric quasifold, the notion of ring of functions cannot be extended in this way.
}
\end{rem}
\subsection{Quasitori as affine toric quasifolds}
The affine toric quasifold corresponding to the cone $\{0\}\subset \delta$ is a very special one: $X_{\{0\}}^\delta=(\C^*)^{n}/\Gamma_\delta$. As we shall see, this is what is called a \textit{quasitorus}.
\begin{dfn}{\rm Let $Q$ be a quasilattice in $\R^n$, the group $\R^n/Q$ is called a 
\textit{real quasitorus} and the group $\C^n/Q$ is called an \textit{algebraic quasitorus}. 
}    
\end{dfn}
Notice that, in the smooth case, when $Q$ coincides with $L_\delta$, these groups are, respectively, a compact real torus and an algebraic torus. Moreover, in this case the mapping
$$\begin{array}{ccccc}\exp_\delta&\colon&\R^n&\longrightarrow&(S^1)^n\\&&u&\longmapsto&(e^{2\pi i\alpha_1^{\delta}(u)},\ldots,e^{2\pi i \alpha_n^\delta(u)})
\end{array}.
$$
induces an isomorphism from $\R^n/L_\delta$ to $(S^1)^n$, while its complex version induces
an isomorphism from $\C^n/L_\delta$ to $(\C^*)^n$. 

In the general case of $Q\supset L_\delta$, notice that $\exp_\delta$ induces an isomorphism from $\Gamma_\delta=Q/L_\delta$ to the subgroup of $(S^1)^n$ given by 
$$\{
(e^{2\pi i\alpha_1^\delta(v)},\ldots,e^{2\pi i\alpha_n^\delta(v)})\in (S^1)^n\;|\; v\in Q\}$$ which we will continue to call $\Gamma_\delta$.
Finally, $\exp_\delta$ and its complex version, induce group isomorphisms of $\R^n/Q$ and $\C^n/Q$  onto $(S^1)^n/\Gamma_\delta$ and $(\C^*)^n/\Gamma_\delta$ respectively. 
Thus 
$X_{\{0\}}^\delta$ is a quasitorus.
\section{Morphisms of affine toric quasifolds}\label{affinemorphism}
We are now ready to introduce, in this setting, the notion of affine toric morphism.
\subsection{Definition and properties}
Consider triples $(\delta_1, Q_1, \{v^{\delta_1}_1,\ldots,v^{\delta_1}_n\})$ and $(\delta_2, Q_2, \{v^{\delta_2}_1,\ldots,v^{\delta_2}_m\})$ and a cone morphism
$\delta_1\rightarrow \delta_2$ defined by a linear map $f\colon\R^n\rightarrow\R^m$.
Take the basis $\alpha_1^{\delta_1},\ldots,\alpha_n^{\delta_1}$ dual
to the basis $v^{\delta_1}_1,\ldots, v^{\delta_1}_n$ and the basis $\alpha_1^{\delta_2},\ldots,\alpha_m^{\delta_2}$ dual
to $v_1^{\delta_2},\ldots, v_m^{\delta_2}$.
Then there is an induced mapping between the
corresponding affine toric quasifolds, 
which is defined as follows:
\begin{equation}\label{morfismo}
\begin{array}{cclc}
\Phi_{\delta_2\delta_1}\colon & X_{\delta_1}&\longrightarrow & X_{\delta_2} \\
& [z_1:\cdots:z_n]&\longmapsto & \left[\prod\limits_{h=1}^nz_h^{\alpha_1^{\delta_2}(f(v_h^{\delta_1}))}:\cdots:\prod\limits_{h=1}^nz_h^{\alpha_m^{\delta_2}(f(v_h^{\delta_1}))}\right].
\end{array}
\end{equation}
\begin{thm}\label{welldefined}
The mapping $\Phi_{\delta_2\delta_1}$ is well defined.
\end{thm}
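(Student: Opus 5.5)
The plan is to reduce everything to exponentials. Set $a_{jh}=\alpha_j^{\delta_2}(f(v_h^{\delta_1}))$, so that $f(v_h^{\delta_1})=\sum_{j=1}^m a_{jh}v_j^{\delta_2}$, and by the cone morphism hypothesis $f(v_h^{\delta_1})\in Q_2$. Assume, as in the definition of affine toric quasifolds, that $\delta_1$ is generated by $v_1^{\delta_1},\ldots,v_{r_1}^{\delta_1}$ and $\delta_2$ by $v_1^{\delta_2},\ldots,v_{r_2}^{\delta_2}$. Three things must be checked:
\begin{enumerate}
\item the image point lies in $\C^{r_2}\times(\C^*)^{m-r_2}$, and the formula makes sense when some $z_h=0$;
\item the complex powers $z_h^{a_{jh}}$ with real exponents are multivalued, but the ambiguity disappears modulo $\Gamma_{\delta_2}$;
\item the result does not depend on the representative of the $\Gamma_{\delta_1}$-orbit.
\end{enumerate}

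First, the zeros. A coordinate $z_h$ can vanish only if $h\le r_1$, and then $v_h^{\delta_1}\in\delta_1$, so $f(v_h^{\delta_1})\in\delta_2$. Since $\delta_2^\vee$ is generated by $\alpha_1^{\delta_2},\ldots,\alpha_{r_2}^{\delta_2}$ and $\pm\alpha_{r_2+1}^{\delta_2},\ldots,\pm\alpha_m^{\delta_2}$, this gives $a_{jh}\ge 0$ for $j\le r_2$ and $a_{jh}=0$ for $j>r_2$. With the conventions $0^0=1$ and $0^{a}=0$ for $a>0$, which are independent of any branch choice, each factor is well defined. For $j>r_2$, every factor involves either a zero exponent or a nonzero base $z_h\in\C^*$, so the $j$-th coordinate lies in $\C^*$. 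Hence the image lies in $\C^{r_2}\times(\C^*)^{m-r_2}$.

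Second, the branches. For $z_h\ne0$, fix $w_h$ with $z_h=e^{w_h}$ and interpret $z_h^{a_{jh}}=e^{a_{jh}w_h}$. Changing $w_h$ to $w_h+2\pi i k$, with $k\in\Z$, multiplies the $j$-th coordinate by $e^{2\pi i k a_{jh}}=e^{2\pi i\alpha_j^{\delta_2}(kf(v_h^{\delta_1}))}$ for every $j$ simultaneously. This is exactly the action of $[kf(v_h^{\delta_1})]\in\Gamma_{\delta_2}$, which is legitimate because $kf(v_h^{\delta_1})\in Q_2$. Coordinates $j$ for which the factor is $0^{a}$ are unaffected, so the action formula still matches.

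Third, invariance under $\Gamma_{\delta_1}$. Take $v\in Q_1$; it replaces $z_h$ by $e^{2\pi i\alpha_h^{\delta_1}(v)}z_h$, so we may take the logarithm $w_h+2\pi i\alpha_h^{\delta_1}(v)$. The $j$-th coordinate is then multiplied by
$$\exp\Bigl(2\pi i\sum_h\alpha_h^{\delta_1}(v)\,a_{jh}\Bigr)=\exp\bigl(2\pi i\,\alpha_j^{\delta_2}(f(v))\bigr),$$
by linearity, since $f(v)=\sum_h\alpha_h^{\delta_1}(v)f(v_h^{\delta_1})$. This is the action of $[f(v)]\in\Gamma_{\delta_2}$, and $f(v)\in Q_2$ because $f(Q_1)\subset Q_2$. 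Zero coordinates stay zero, so again nothing changes there.

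The main subtlety is the bookkeeping in the second step: the multivaluedness must be absorbed uniformly in all $m$ coordinates at once, including when some $z_h$ vanish. That is why the correction has to be expressed as the action of the single group element $[kf(v_h^{\delta_1})]\in\Gamma_{\delta_2}$, rather than handled coordinate by coordinate.
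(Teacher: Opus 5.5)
Your proof is correct and follows the paper's argument step for step. You get nonnegative exponents from $f(\delta_1)\subset\delta_2$, you absorb the branch ambiguity (using one logarithm of each $z_h$ for all $j$) into an element of $\Gamma_{\delta_2}$ coming from $f(L_{\delta_1})\subset Q_2$, and you use the linearity computation showing that $[v]\in\Gamma_{\delta_1}$ acts through $[f(v)]\in\Gamma_{\delta_2}$. Your explicit check that $a_{jh}=0$ for $j>r_2$, $h\le r_1$, which ensures the image lies in $\C^{r_2}\times(\C^*)^{m-r_2}$, and your use of complex logarithms rather than the paper's $x_h\in\R$ fill in small points the paper leaves implicit.
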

\noindent\begin{preuve} 
Take a point
$[z_1:\cdots:z_n]\in X_{\delta_1}$.
Notice that $z_h$ can be $0$ if and only if $h\in I_{\delta_1}$. However, the corresponding exponents are non negative.
In fact, $f(\delta_1)\subset\delta_2$ implies $f^*(\delta_2^\vee)\subset\delta_1^\vee$. Therefore, since $\alpha_j^{\delta_2}\in\delta_2^\vee$,  $j=1,\ldots,m$, we have that $\alpha_j^{\delta_2}(f(v_h^{\delta_1}))\geq0$ for all $h\in I_{\delta_1}$.
We adopt the convention that $0$ to the power $0$ gives $1$. Now, write each $z_h\neq0$
as $z_h=e^{2\pi i x_h}$, for
a choice of $x_h\in\R$. Notice, however, that
we also have $z_h=e^{2\pi i (x_h+k_h)}$ for every $k_h\in \Z$. 
Taking $u=\sum_{h=1}^n k_h v_h^{\delta_1}$, 
for all $k_h\in\Z$, $h=1,\ldots,n$, we can express this as
$z_h=e^{2\pi i (x_h+\alpha_h^{\delta_1}(u))}$, for every $u \in L_{\delta_1}$.
This entails that  each
monomial appearing on the right hand side of (\ref{morfismo})
$$\prod_{h=1}^n z_h^{\alpha_j^{\delta_2}(f(v_h^{\delta_1}))}, \quad j=1,\ldots ,m$$ is unique
up to 
$$\prod_{h=1}^n e^{2\pi i \alpha_j^{\delta_2}(f(v_h^{\delta_1}))\alpha_h^{\delta_1}(u)}
= e^{2\pi i \left(\sum_{h=1}^n \alpha_j^{\delta_2}(f(v_h^{\delta_1}))\alpha_h^{\delta_1}(u)\right)}=e^{2\pi i \alpha_j^{\delta_2} \left(f\left(u\right)\right)},$$
$u\in L_{\delta_1}$. However, 
in $X_{\delta_2}$ this is not an issue
at all, since $f(u)\in Q_2$
and this amounts to the action of an element
of $\Gamma_{\delta_2}$.

Let us now show that $\Phi_{\delta_2\delta_1}$
does not depend on
the particular representative
of each point in $X_{\delta_1}$, namely that
$$\Phi_{\delta_2\delta_1}\left[e^{2\pi i\alpha_1^{\delta_1}(v)}z_1:\cdots:e^{2\pi i\alpha_n^{\delta_1}(v)}z_n\right]=\Phi_{\delta_2\delta_1}[z_1:\cdots:z_n]$$
for every vector $v\in Q_1$ and for every point $[z_1:\cdots:z_n]\in X_{\delta_1}$. However, we have, for $j=1,\ldots,m$, (again up to
$e^{2\pi i \alpha_j^{\delta_2} \left(f\left(u\right)\right)}$, $u_{\delta_1}\in L_{\delta_1}$)

$$\begin{array}{cll}\prod\limits_{h=1}^n\left(e^{2\pi i\alpha_h^{\delta_1}(v)}z_h\right)^{\alpha^{\delta_2}_{j}(f(v_h^{\delta_1}))}
&=&\prod\limits_{h=1}^n e^{2\pi i \alpha_{j}^{\delta_2}(f(v_h^{\delta_1}))\alpha_h^{\delta_1}\left(v\right)}\prod\limits_{h=1}^n z_h^{\alpha_{j}^{\delta_2}(f(v_h^{\delta_1}))}\\
&=&e^{2\pi i\sum_{h=1}^n 
\alpha_{j}^{\delta_2}(f(v_h^{\delta_1}))\alpha_h^{\delta_1}(v)}\prod\limits_{h=1}^n z_h^{\alpha_{j}^{\delta_2}(f(v_h^{\delta_1}))}\\
&=&e^{2\pi i\alpha^{\delta_2}_{j}(f(v))}\prod\limits_{h=1}^n {z_h}^{\alpha^{\delta_2}_{j}(f(v_h^{\delta_1}))}.
\end{array}$$
and, again, since $f(v)\in Q_2$, this amounts to acting
via an element of $\Gamma_{\delta_2}$ and
thus has no effect on $X_{\delta_2}$.
\end{preuve}

\begin{dfn}[Affine toric morphism]\rm{A \textit{morphism} between two affine quasifolds $X_{\delta_1}$ and $X_{\delta_2}$  is any mapping that is induced by a cone morphism.} 
\end{dfn}
\begin{dfn}[Generalized Laurent monomials]
\rm{We call the monomials
$$\prod_{h=1}^n
z_h^{\alpha_j^{\delta_2}(f(v_h^{\delta_1}))}, \quad j=1,\ldots ,m$$
appearing in the definition of $\Phi_{\delta_2\delta_1}$
\textit{generalized
Laurent monomials}.}
\end{dfn}
\begin{rem}
\rm{As we have shown in the proof of
Theorem~\ref{welldefined} above, these monomials  are only well--defined up to the elements
$e^{2\pi i \alpha_j^{\delta_2}(f(u))}$, $u\in L_{\delta_1}$. Unlike what happens in the rational case, as standalones, they cannot be thought of as elements of
the algebra of Laurent polynomials. They only make sense when grouped together and projected onto $X_{\delta_2}$.

One possible formal interpretation is the following. We assume for simplicity that $\delta_1$ and $\delta_2$ are maximal cones. Recall then that 
\begin{itemize}
\item $\C[S_{\delta_1}]=\C[\xi_1^{\delta_1},\ldots,\xi^{\delta_1}_n]$, where $\xi_h^{\delta_1}=\chi^{\alpha_h^{\delta_1}}$, $h=1,\ldots,n$
\item $\C[S_{\delta_2}]=\C[\xi_1^{\delta_2},\ldots,\xi^{\delta_2}_m]$, where $\xi_j^{\delta_2}=\chi^{\alpha_j^{\delta_2}}$, $j=1,\ldots,m$.
\end{itemize}
Define
the following formal mapping from $\C[S_{\delta_2}]$ to the set of 
formal polynomials containing $\C[S_{\delta_1}]$:
$$\Phi_{\delta_2\delta_1}^*\left(\xi_j^{\delta_2}\right)=\prod_{h=1}^n\left(\xi_h^{\delta_1}\right)^{\alpha_j^{\delta_2}(f(v_h^{\delta_1}))},\quad j=1,\ldots ,m.$$
If we interpret these polynomials as formal functions on $\C^n$ and $\C^m$ respectively, this gives rise to a formal mapping $\widetilde{\Phi}_{\delta_2\delta_1}\colon\C^n\rightarrow\C^m$ by requiring, like in the rational case, that $\Phi_{\delta_2\delta_1}^*$ is the pullback of $\widetilde{\Phi}_{\delta_2\delta_1}$,
namely that 
$$
\xi^{\delta_2}_j\left(\widetilde{\Phi}_{\delta_2\delta_1}(z_1,\ldots, z_n)\right)=\Phi_{\delta_2\delta_1}^*\left(\xi^{\delta_2}_j\right)(z_1,\ldots, z_n),\quad\quad j=1,\ldots,m.
$$
We then necessarily have
$$\widetilde{\Phi}_{\delta_2\delta_1}(z_1,\ldots,z_n)=\left(\prod_{h=1}^n z_h^{\alpha_1^{\delta_2}(f(v_h^{\delta_1}))},\ldots,\prod_{h=1}^n z_h^{\alpha_m^{\delta_2}(f(v_h^{\delta_1}))}\right),$$
which induces our mapping
$$
\Phi_{\delta_2\delta_1}\colon X_{\delta_1} \longrightarrow X_{\delta_2}.
$$}
\end{rem}
We now prove that the map sending a cone morphism to the induced affine morphism has the functorial property, namely it respects composition and identity.
\begin{thm}\label{functoriality}
Consider triples
$(\delta_1, Q_1, \{v^{\delta_1}_1,\ldots v^{\delta_1}_n\})$, $(\delta_2, Q_2, \{v^{\delta_2}_1,\ldots v^{\delta_2}_l\})$, and $(\delta_3, Q_3, \{v^{\delta_3}_1,\ldots v^{\delta_3}_m\})$. Let $\delta_1\rightarrow\delta_2$ and 
$\delta_2\rightarrow\delta_3$
be cone morphisms defined, respectively, by linear mappings $f\colon \R^n\rightarrow \R^l$
and $g\colon \R^l\rightarrow \R^m$. Let $\Phi_{\delta_2\delta_1}\colon X_{\delta_1}\rightarrow X_{\delta_2}$ and $\Psi_{\delta_3\delta_2}\colon X_{\delta_2}\rightarrow X_{\delta_3}$ be the corresponding affine toric morphism. Then
$\Psi_{\delta_3\delta_2}\circ \Phi_{\delta_2\delta_1}\colon X_{\delta_1}\rightarrow X_{\delta_3}$
is the morphism induced by $g\circ f$.
Moreover, the identity cone morphism 
induces the identity affine quasifold morphism.
\end{thm}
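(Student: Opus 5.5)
The plan is a direct computation with the generalized Laurent monomials, using Theorem~\ref{welldefined} to control all phase ambiguities. First, take a point $[z_1:\cdots:z_n]\in X_{\delta_1}$ and fix a representative. For each $z_h\neq 0$ choose $x_h\in\R$ with $z_h=e^{2\pi i x_h}$ (more precisely $z_h=|z_h|e^{2\pi i x_h}$; the modulus factors multiply without ambiguity, so I will track only phases). Set $a_{kh}=\alpha_k^{\delta_2}(f(v_h^{\delta_1}))$, $b_{jk}=\alpha_j^{\delta_3}(g(v_k^{\delta_2}))$ and $c_{jh}=\alpha_j^{\delta_3}(g(f(v_h^{\delta_1})))$. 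Expanding $f(v_h^{\delta_1})=\sum_k a_{kh}v_k^{\delta_2}$ and applying $\alpha_j^{\delta_3}\circ g$ gives the matrix identity $c_{jh}=\sum_k b_{jk}a_{kh}$, so $C=BA$.

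Next, compute $\Phi_{\delta_2\delta_1}[z]$ as the class of $w=(w_1,\ldots,w_l)$ with $w_k=\prod_h z_h^{a_{kh}}$, realized using the chosen phases: $w_k=\prod_h|z_h|^{a_{kh}}e^{2\pi i\sum_h a_{kh}x_h}$. Since $\Psi_{\delta_3\delta_2}$ is well defined (Theorem~\ref{welldefined}), I may apply it to this particular representative with phases $y_k=\sum_h a_{kh}x_h$. This gives coordinates
$$\prod_k w_k^{b_{jk}}=\prod_h|z_h|^{\sum_k b_{jk}a_{kh}}\,e^{2\pi i\sum_k b_{jk}\sum_h a_{kh}x_h}=\prod_h z_h^{c_{jh}},$$
where the last expression uses the same chosen phases $x_h$. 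Reordering the finite sums is legitimate once the phases are fixed, since real powers of positive reals behave multiplicatively. This is exactly the monomial defining the morphism induced by $g\circ f$. That map is a cone morphism, because $g(f(Q_1))\subset Q_3$ and $g(f(\delta_1))\subset\delta_3$. Hence the two classes in $X_{\delta_3}$ coincide.

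The main obstacle is the bookkeeping of zero coordinates and of the multivaluedness of real powers. For zeros: if $z_h=0$ then $h\in I_{\delta_1}$ and $a_{kh}\geq 0$. Then $w_k=0$ exactly when some $z_h=0$ with $a_{kh}>0$, and in that case $k\in I_{\delta_2}$ and $b_{jk}\ge0$. I will check that, under the convention $0^0=1$, both sides vanish for the same $j$. If some $h$ with $z_h=0$ has $c_{jh}>0$, then some $k$ has $b_{jk}a_{kh}>0$, hence $w_k=0$ and $b_{jk}>0$, so the left side vanishes. Conversely, if the left side vanishes via some $k$, then the right side vanishes as well, since all terms $b_{jk}a_{kh}$ are nonnegative there. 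For multivaluedness: any other choice of phases changes the result only by elements of $\Gamma_{\delta_3}$, as established in the proof of Theorem~\ref{welldefined}, so the equality holds in $X_{\delta_3}$.

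Finally, for the identity: take $f=\mathrm{id}$ with the same triple on both sides. Then $\alpha_j^{\delta}(v_h^{\delta})=\delta_{jh}$, so the monomials reduce to $z_j$ and the induced map is $[z]\mapsto[z]$.
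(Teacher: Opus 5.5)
Your proposal is correct and is essentially the paper's proof: you substitute the generalized Laurent monomials of $\Phi_{\delta_2\delta_1}$ into those of $\Psi_{\delta_3\delta_2}$ and combine exponents via $\sum_k \alpha_j^{\delta_3}(g(v_k^{\delta_2}))\,\alpha_k^{\delta_2}(f(v_h^{\delta_1}))=\alpha_j^{\delta_3}((g\circ f)(v_h^{\delta_1}))$, and the identity case follows from duality. Your extra bookkeeping makes rigorous the step $(\prod_h z_h^{a_{kh}})^{b_{jk}}=\prod_h z_h^{c_{jh}}$, which the paper carries out only formally; this consists of fixing phases so that the equality holds for the chosen representatives, and checking zero coordinates via $a_{kh}\ge 0$ for $h\in I_{\delta_1}$, $a_{kh}=0$ for $k\notin I_{\delta_2}$, and $b_{jk}\ge 0$ for $k\in I_{\delta_2}$.
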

\noindent\begin{preuve}
$$\begin{array}{l}(\Psi_{\delta_3\delta_2}\circ \Phi_{\delta_2\delta_1})
[z_1:\cdots:z_n]\\=\Psi_{\delta_3\delta_2}\left[\prod\limits_{h=1}^n z_h^{\alpha_1^{\delta_2}(f(v_h^{\delta_1}))}:\cdots:\prod\limits_{h=1}^n z_h^{\alpha_l^{\delta_2}(f(v_h^{\delta_1}))}\right]\\=
\left[\prod\limits_{j=1}^l\left(\prod\limits_{h=1}^n z_h^{\alpha_j^{\delta_2}(f(v_h^{\delta_1}))}\right)^{\alpha_1^{\delta_3}(g(v_j^{\delta_2}))}:\cdots:\prod\limits_{j=1}^l\left(\prod\limits_{h=1}^n z_h^{\alpha_j^{\delta_2}(f(v_h^{\delta_1}))}\right)^{\alpha_m^{\delta_3}(g(v_j^{\delta_2}))}\right]\\
=\left[\prod\limits_{h=1}^n z_h^{\sum_{j=1}^l\alpha_1^{\delta_3}(g(v_j^{\delta_2}))\alpha_j^{\delta_2}(f(v_h^{\delta_1}))}:\cdots:\prod\limits_{h=1}^n z_h^{\sum_{j=1}^l\alpha_m^{\delta_3}(g(v_j^{\delta_2}))\alpha_j^{\delta_2}(f(v_h^{\delta_1}))}\right]\\
=\left[\prod\limits_{h=1}^n z_h^{\alpha_1^{\delta_3}((g\circ f)(v_h^{\delta_1}))}:\cdots:\prod\limits_{h=1}^n z_h^{\alpha_m^{\delta_3}((g\circ f)(v_h^{\delta_1}))}\right],
\end{array}
$$
In the same way, it is easy to check that the identity cone morphism induces the identity affine quasifold isomorphism.
\end{preuve}
\begin{dfn}[Affine toric isomorphism] \rm{An \textit{isomorphism} between two affine quasifolds $X_{\delta_1}$ and $X_{\delta_2}$  is any mapping that is induced by a cone isomorphism. Whenever this happens we will say that two affine toric quasifolds are \textit{isomorphic}.}
\end{dfn}
It is easy to check, in fact, that 
such a map is bijective.
\subsection{Cone inclusions}
Cone inclusions provide some interesting examples of morphisms. We begin with the important special case of face inclusions.
\begin{lem}\label{faceinclusion}
Consider a triple
$({\delta}, Q, \{v^{\delta}_1,\ldots, v^{\delta}_n\})$ and
a face $\rho\subset\delta$. There is a natural inclusion of
$X_{\rho}^\delta$ in $X_{\delta}$.
\end{lem}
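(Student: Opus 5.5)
The plan is to obtain the inclusion as the affine toric morphism induced by the identity map of $\R^n$, viewed as a cone morphism $\rho\rightarrow\delta$, and then to check that it is injective.

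First, the identity is a cone morphism. Both triples use the same quasilattice $Q$, so $\mathrm{id}(Q)\subset Q$. Since $\rho$ is a face of $\delta$, we also have $\mathrm{id}(\rho)\subset\delta$.

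Next, compute the induced map from formula (\ref{morfismo}). The induced triple on $\rho$ uses the same basis $v_1^\delta,\ldots,v_n^\delta$. Hence the exponents are
$$\alpha_j^{\delta}(v_h^{\delta})=\delta_{jh},$$
and $\Phi_{\delta\rho}$ is simply
$$[z_1:\cdots:z_n]\longmapsto[z_1:\cdots:z_n].$$
By Theorem~\ref{welldefined} this map is well defined. One can also see this directly here: the source is
$$X_\rho^\delta=(\C^{r'}\times(\C^*)^{n-r'})/\Gamma_\delta,$$
where the coordinates allowed to vanish are those indexed by $I_\rho\subseteq I_\delta$. This set sits inside $\C^{r}\times(\C^*)^{n-r}$, whose zero-allowed coordinates are indexed by $I_\delta$. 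Moreover, both quotients are taken by the same group $\Gamma_\delta=Q/L_\delta$, acting by the same formula $[v]\cdot z=(e^{2\pi i\alpha_h^\delta(v)}z_h)_h$. So the map is induced by the inclusion of a $\Gamma_\delta$-invariant subset.

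The one step needing an argument is injectivity. Suppose two points of $X_\rho^\delta$ have the same image in $X_\delta$. Then their representatives in $\C^{r'}\times(\C^*)^{n-r'}$ differ by an element of $\Gamma_\delta$ acting on the larger space. That element is the same group element acting on the smaller space, so the two points already coincide in $X_\rho^\delta$.

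It is also worth noting that the image is the open $\Gamma_\delta$-saturated subset $\{z_h\neq0,\ h\in I_\delta\setminus I_\rho\}/\Gamma_\delta$. This identifies $X_\rho^\delta$ with a subset of $X_\delta$, so the inclusion is natural: it is canonically induced by the identity cone morphism.
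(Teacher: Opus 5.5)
Your proposal is correct and follows the paper's proof: the identity of $\R^n$, viewed as a cone morphism $\rho\rightarrow\delta$, induces the inclusion. You go further than the paper by checking explicitly that the induced map is the coordinate identity and that it is injective, since both quotients are taken by the same group $\Gamma_\delta=Q/L_\delta$ acting on a $\Gamma_\delta$-invariant subset; the paper leaves this verification implicit.
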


\noindent\begin{preuve}
The identity of $\R^n$ defines a cone morphism $\rho\rightarrow \delta$ that induces the required inclusion. 
\end{preuve}

Note that both quasifolds have dimension $n$.

Consider now two triples $({\delta_1}, Q_1, \{v^{\delta_1}_1,\ldots, v^{\delta_1}_n\})$,    $({\delta_2}, Q_2, \{v^{\delta_2}_1,\ldots, v^{\delta_2}_m\})$
and a cone morphism $\delta_1\rightarrow \delta_2$
defined by a linear map $f\colon \R^n\rightarrow \R^m$.
Assume that we have faces
$\rho_1\subset \delta_1$, $\rho_2\subset \delta_2$ such that $f(\rho_1)\subset\rho_2$.
The above morphism restricts to a morphism
$\rho_1\rightarrow \rho_2$.
\begin{prop}\label{restriction}
$\Phi_{\rho_2\rho_1}$ is the restriction of 
$\Phi_{\delta_2\delta_1}$ to $X_{\rho_1}^{\delta_1}$.
\end{prop}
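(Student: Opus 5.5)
The plan is to compare the defining formula (\ref{morfismo}) for both morphisms directly, using the induced triples on the faces. By the Remark on induced triples, $\rho_1$ carries the triple $(\rho_1,Q_1,\{v_1^{\delta_1},\ldots,v_n^{\delta_1}\})$ and $\rho_2$ carries $(\rho_2,Q_2,\{v_1^{\delta_2},\ldots,v_m^{\delta_2}\})$. Consequently the dual bases are the same, $\alpha_h^{\delta_1}$ and $\alpha_j^{\delta_2}$. The lattices agree, $L_{\rho_1}=L_{\delta_1}$ and $L_{\rho_2}=L_{\delta_2}$, and so do the groups, $\Gamma_{\rho_1}=\Gamma_{\delta_1}$ and $\Gamma_{\rho_2}=\Gamma_{\delta_2}$. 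The underlying linear map of the restricted cone morphism $\rho_1\rightarrow\rho_2$ is still $f$, and $f(Q_1)\subset Q_2$ holds automatically. Therefore the generalized Laurent monomials defining $\Phi_{\rho_2\rho_1}$ are literally the same expressions
$$\prod_{h=1}^n z_h^{\alpha_j^{\delta_2}(f(v_h^{\delta_1}))},\quad j=1,\ldots,m,$$
as those defining $\Phi_{\delta_2\delta_1}$.

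Next I make the inclusions explicit. By Lemma~\ref{faceinclusion}, the inclusion $X_{\rho_1}^{\delta_1}\hookrightarrow X_{\delta_1}$ is the morphism induced by the identity cone morphism $\rho_1\rightarrow\delta_1$. Since the bases coincide, formula (\ref{morfismo}) shows that this inclusion sends $[z_1:\cdots:z_n]$ to $[z_1:\cdots:z_n]$. In other words, it is the map induced by the inclusion $\C^{r_1}\times(\C^*)^{n-r_1}\subset\C^{s_1}\times(\C^*)^{n-s_1}$, with coordinates reordered according to $I_{\rho_1}\subseteq I_{\delta_1}$, passed to the quotient by the common group $\Gamma_{\delta_1}$. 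The same description holds for $X_{\rho_2}^{\delta_2}\hookrightarrow X_{\delta_2}$.

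The statement then reduces to commutativity of the square formed by these two inclusions and the two morphisms. One way to see this is via Theorem~\ref{functoriality}: both composites $\iota_2\circ\Phi_{\rho_2\rho_1}$ and $\Phi_{\delta_2\delta_1}\circ\iota_1$ are induced by the same linear map $f=\mathrm{id}\circ f=f\circ\mathrm{id}$, viewed as a cone morphism $\rho_1\rightarrow\delta_2$. The direct alternative is to evaluate both composites on a representative and observe that they give identical monomials.

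The only point requiring care, and the main obstacle, is that $\Phi_{\rho_2\rho_1}$ genuinely lands in $X_{\rho_2}^{\delta_2}$ and that the conventions agree on points with zero coordinates. For $h\notin I_{\rho_1}$ we have $z_h\neq0$. A coordinate $j\notin I_{\rho_2}$ of the image can only vanish if some exponent $\alpha_j^{\delta_2}(f(v_h^{\delta_1}))$ with $h\in I_{\rho_1}$ is nonzero. But $f(v_h^{\delta_1})\in\rho_2$, which is spanned by the $v_i^{\delta_2}$ with $i\in I_{\rho_2}$, so $\alpha_j^{\delta_2}(f(v_h^{\delta_1}))=0$ for $j\notin I_{\rho_2}$. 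Combined with the convention $0^0=1$, this shows the $j$-th coordinate is a product of nonzero numbers, hence nonzero. The same convention is used in both formulas, so the two maps agree pointwise on representatives. Well-definedness up to $\Gamma_{\delta_2}$ is supplied by Theorem~\ref{welldefined}.
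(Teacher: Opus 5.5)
Your proposal is correct and follows the paper's route. The paper likewise takes the commutative square of cone morphisms ($\rho_1\to\rho_2$, $\delta_1\to\delta_2$ and the two face inclusions), turns it into a commutative square of affine morphisms by Theorem~\ref{functoriality}, and identifies the vertical arrows with the natural inclusions of Lemma~\ref{faceinclusion}; your explicit check that coordinates outside $I_{\rho_2}$ stay nonzero, so that the image really lies in $X_{\rho_2}^{\delta_2}$, is a detail the paper leaves implicit, and your argument for it is sound.
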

\noindent\begin{preuve}
By Theorem~\ref{functoriality},
the commutative diagram
$$	
\xymatrix{
\rho_1\ar[r]\ar@{_(->}[d]&\rho_2
\ar@{_(->}[d]\\
\delta_1\ar[r]&\delta_2.
}
$$
induces a commutative diagram of
the corresponding affine quasifold morphisms
$$	
\xymatrix{
X_{\rho_1}^{\delta_1}\ar[r]^{\Phi_{\rho_2\rho_1}}\ar@{_(->}[d]&X_{\rho_2}^{\delta_2}
\ar@{_(->}[d]\\
X_{\delta_1}\ar[r]^{\Phi_{\delta_2\delta_1}}&X_{\delta_2},
}
$$
where the vertical arrows are natural inclusions by Lemma~\ref{faceinclusion}.

\end{preuve}

This result will be of relevance in the case where the cones are part of a fan and the morphisms descend from a fan morphism.

The situation for general cone inclusions is quite different, as we can see from the important example below.
\begin{example}\label{inclusion}
\rm{Consider the cone
$\tau$ of Examples~\ref{cono proiettivo}, \ref{cono proiettivo 2} and take the cone
$\eta$ that is generated by the vectors $v_2=(-1,-a)$ and
$v_4=(0,-1)$ (see Figure~\ref{quasihirzecone})
. The dual basis of $v_2,v_4$ is $\alpha_2^\eta=-e_1^*,\alpha_4^\eta=ae_1^*-e_2^*$, so the affine toric quasifold corresponding to the triple $(\eta, \Z\times(\Z+a\Z), \{v_2, v_4\})$
is given by $X_\eta=\mathbb{C}^2/\Gamma_\eta$, where
$$\Gamma_\eta=\{\ [ahv_4] \mid h\in \Z\}\simeq (\mathbb{Z}+a\mathbb{Z})/\Z$$
acts on $\C^2$ as follows
$$[ahv_4]\cdot(z_2,z_4)=(z_2,e^{2\pi i ah}z_4).$$ 
\begin{figure}[ht]
\begin{center}
\includegraphics{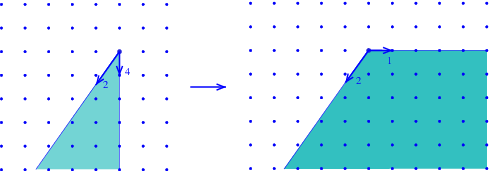}
\caption{The subcone $\eta$ of $\tau$.}
\label{quasihirzecone}
\end{center}
\end{figure}
The identity of $\R^2$
defines a cone morphism
$\eta\rightarrow\tau$. The induced affine quasifold  morphism is given by the surjective map
$$
\begin{array}{cclc}
& X_\eta=\C^2/\Gamma_{\eta}&\longrightarrow & X_\tau=\C^2/\Gamma_{\tau}\\
& [z_2:z_4]&\longmapsto & \left[z_4^{\frac 1a}:z_2z_4^{\frac 1a}\right].
\end{array}
$$}
\end{example}

\subsection{Quasitorus action and equivariance}
Take a triple
$(\delta, Q, \{v^{\delta}_1,\ldots, v^{\delta}_n\})$ with $\delta$ an $r$--dimensional cone. Consider the inclusion $\{0\}\subset\delta$. We have a natural action of the quasitorus 
$X_{\{0\}}^\delta$ on 
$X_\delta$ that is given by
 $$[\exp_\delta(u)]\cdot [z_1:\cdots:z_n]=
     [e^{2\pi i\alpha_1^\delta(u)}z_1:\cdots:e^{2\pi i \alpha_n^\delta(u)}z_n],\quad u\in \C^n.$$
As in the rational case, one can show that this action $X_{\{0\}}^\delta\times X_\delta\rightarrow X_\delta$ is induced by the cone morphism $\{0\}\times\delta\rightarrow\delta$ that takes $(u,v)$ in $u+v$.
Thus $X_{\{0\}}^\delta$ is contained in every affine quasifold $X_\delta$ as an $n$--dimensional orbit. Notice that, as in the classical case, we have a bijective orbit--cone correspondence: $X_\delta$ is the disjoint union of the quasitorus orbits corresponding to the relative interiors of the faces of $\delta$. For example, the orbit corresponding to the $l$--dimensional face $\delta'$ generated by 
$\{v_1^\delta,\ldots,v_l^\delta\}$, with $l\leq r$,
 will be the subset $\{[z_1:\cdots:z_n]\in X_\delta\;|\;z_i\neq0\quad\text{iff}\quad l<i\leq n\}.$
 \begin{example}
\rm{Consider the cone $\delta$ in Example~\ref{cono proiettivo 2}.
The corresponding affine toric quasifold, $(\C\times\C^*)/\Gamma_\tau$,
 is the union of two quasitorus orbits: the full orbit $(\C^*\times\C^*)/\Gamma_\tau$, corresponding to the zero dimensional cone, and the $1$-dimensional orbit $(\{0\}\times \C^*)/\Gamma_\tau$, corresponding the relative interior
 of $\delta$.}
 \end{example}
 Let us now go back to our morphism
 $\Phi_{\delta_2\delta_1}$.
The affine toric morphism
$\Phi_{\{0\}\{0\}}$ sends the quasitorus $X_{\{0\}}^{\delta_1}\subset X_{\delta_1}$ to the quasitorus $X_{\{0\}}^{\delta_2}\subset X_{\delta_2}$ and it is given by $$\Phi_{\{0\}\{0\}}([\exp_{\delta_1}(v)])=[\exp_{\delta_2}(f(v))],\quad v\in\C^n,$$ 
where we take the natural extension of $f$ to $\C^n$. It is therefore the natural group homomorphism induced by $f$ from the first quasitorus to the second.
By Proposition~\ref{restriction}, the 
restriction of
$\Phi_{\delta_2\delta_1}$ to $X_{\{0\}}^{\delta_1}$ it coincides with the homomorphism $\Phi_{\{0\}\{0\}}$ and is therefore naturally equivariant. This extends to all of $X_{\delta_1}$, as shown below:
\begin{prop}\label{equivariance}
The affine toric morphism
$\Phi_{\delta_2\delta_1}$
is equivariant
with respect to the quasitorus actions, namely
$$
\Phi_{\delta_2\delta_1}([\exp_{\delta_1}(v)]\cdot[z_1:\cdots:z_n])=[\exp_{\delta_2}(f(v))]\cdot\Phi_{\delta_2\delta_1}([z_1:\cdots:z_n]), v\in\C^n.$$
\end{prop}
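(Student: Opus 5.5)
The plan is to verify the identity by direct computation on representatives, exactly as in the proof of Theorem~\ref{welldefined}. Fix $v\in\C^n$ and a point $[z_1:\cdots:z_n]\in X_{\delta_1}$. By definition of the quasitorus action, $[\exp_{\delta_1}(v)]\cdot[z_1:\cdots:z_n]=[e^{2\pi i\alpha_1^{\delta_1}(v)}z_1:\cdots:e^{2\pi i\alpha_n^{\delta_1}(v)}z_n]$. We then apply formula (\ref{morfismo}) to this representative and compute each generalized Laurent monomial, $j=1,\ldots,m$:
$$\prod_{h=1}^n\left(e^{2\pi i\alpha_h^{\delta_1}(v)}z_h\right)^{\alpha_j^{\delta_2}(f(v_h^{\delta_1}))}=e^{2\pi i\sum_{h=1}^n\alpha_j^{\delta_2}(f(v_h^{\delta_1}))\alpha_h^{\delta_1}(v)}\prod_{h=1}^n z_h^{\alpha_j^{\delta_2}(f(v_h^{\delta_1}))}.$$

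The key step is the linear-algebra identity $\sum_{h}\alpha_j^{\delta_2}(f(v_h^{\delta_1}))\alpha_h^{\delta_1}(v)=\alpha_j^{\delta_2}(f(v))$. It holds because $v=\sum_h\alpha_h^{\delta_1}(v)v_h^{\delta_1}$ in the dual basis, and $f$ and $\alpha_j^{\delta_2}$ are extended $\C$-linearly to $\C^n$ and $\C^m$. The right-hand side then becomes $e^{2\pi i\alpha_j^{\delta_2}(f(v))}$ times the $j$-th monomial of $\Phi_{\delta_2\delta_1}[z_1:\cdots:z_n]$. This is precisely the $j$-th coordinate of $[\exp_{\delta_2}(f(v))]\cdot\Phi_{\delta_2\delta_1}[z_1:\cdots:z_n]$.

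The main obstacle is bookkeeping with the multivalued complex powers. The splitting $(e^{2\pi i a}z)^{c}=e^{2\pi i ac}z^{c}$, with $a$ now possibly complex, is only valid up to the choice of logarithm. I would handle it as in Theorem~\ref{welldefined}: write $z_h=e^{2\pi i x_h}$ and use $x_h+\alpha_h^{\delta_1}(v)$ as the logarithm of the new coordinate. Changing branches multiplies the result by $e^{2\pi i\alpha_j^{\delta_2}(f(u))}$ with $u\in L_{\delta_1}$; since $f(u)\in Q_2$, this is absorbed by $\Gamma_{\delta_2}$. Coordinates with $z_h=0$ occur only for $h\in I_{\delta_1}$, where the exponents are nonnegative. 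If an exponent is positive, both sides vanish. If it is zero, both factors equal $1$ by the convention $0^0=1$.

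Finally, the statement is independent of the representative $v$ of $[\exp_{\delta_1}(v)]$. Replacing $v$ by $v+w$ with $w\in Q_1$ changes the right-hand side by $\exp_{\delta_2}(f(w))$ with $f(w)\in Q_2$, which again lies in $\Gamma_{\delta_2}$. Together with Theorem~\ref{welldefined}, this shows the computation descends to the quotients.
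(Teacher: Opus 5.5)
Your proof is correct and follows the paper's own argument. Like the paper, you compute directly on representatives, handle the multivalued powers and the zero coordinates as in the proof of Theorem~\ref{welldefined}, and rely on the identity $\sum_h\alpha_j^{\delta_2}(f(v_h^{\delta_1}))\alpha_h^{\delta_1}(v)=\alpha_j^{\delta_2}(f(v))$; your further checks (branch changes absorbed by $\Gamma_{\delta_2}$, independence of $v$ modulo $Q_1$) only make explicit what the paper leaves implicit.
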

\noindent\begin{preuve}
Take a $[z_1:\cdots:z_n]\in X_{\delta_1}$. Recall that $z_h$ can be $0$ if and only if $h\in I_{\delta_1}$. Now we proceed as in the proof of Theorem~\ref{welldefined} and we replace each  non--zero $z_h$ in (\ref{morfismo}) with
$e^{2\pi i\alpha_h^{\delta_1}(v)}z_h$, $v\in\C^n$.
Then we have
$$\prod_{h=1}^n\left(e^{2\pi i\alpha_h^{\delta_1}(v)}z_h\right)^{\alpha^{\delta_2}_{j}(f(v_h^{\delta_1}))}=e^{2\pi i\alpha^{\delta_2}_{j}(f(v))}\prod_{h=1}^n {z_h}^{\alpha^{\delta_2}_{j}(f(v_h^{\delta_1}))}.
$$ 
This implies that
$$
\begin{array}{cl}
\Phi_{\delta_2\delta_1}([e^{2\pi i\alpha_1^{\delta_1}(v)}:\cdots:e^{2\pi i\alpha_n^{\delta_1}(v)}]\cdot[z_1:\cdots :z_n])=\\

[e^{2\pi i\alpha^{\delta_2}_{1}(f(v))}:\cdots: e^{2\pi i\alpha^{\delta_2}_{m}(f(v))}]\cdot\Phi_{\delta_2\delta_1}[z_1:\cdots :z_n].
\end{array}
$$
\end{preuve}

\subsection{Quasilattice inclusions}\label{quoziente}
Consider a triple
$(\delta,Q,\{v_1^\delta,\ldots,v_n^\delta\})$
and the corresponding affine toric quasifold
$X_\delta$.
Take now a new quasilattice 
$Q'$ containing the quasilattice $Q$.
Consider the new
triple
$(\delta,Q',\{v_1^\delta,\ldots,v_n^\delta\})$
with its own affine toric quasifold
$X'_\delta$. 
The identity of $\R^n$ induces  the equivariant surjective morphism
$$
\begin{array}{cclc}
& X_{\delta}&\longrightarrow & X'_{\delta}\\
& [z_1:\cdots:z_n]&\longmapsto & [z_1:\cdots:z_n].
\end{array}
$$
Given the quotient group
$\Gamma=Q'/Q$, it
is straightforward to check that 
$\Gamma'_\delta/\Gamma_\delta\simeq \Gamma$
and that $\Gamma$ is the
kernel of the restriction of
the above morphism to the torus $(\C^*)^n/\Gamma_{\delta}$.
This construction allows to present the
quotient $X_\delta/\Gamma$ as the affine toric quasifold $X'_\delta$.
\begin{example}
\rm{Take the cone $\sigma$ in 
Examples~\ref{quasispheremodel}, \ref{quasispheremodeltwo}. 
Consider the triples
$(\sigma, \Z, \{1\})$ and $
(\sigma, \Z+a\Z, \{a\}).$
The identity of $\R$ induces the
surjective quasifold morphism of the corresponding affine quasifolds given by
$$
\begin{array}{cclc}
& \C&\longrightarrow & \C/\Gamma_\sigma\\
& z&\longmapsto & \left[z^{\frac 1a}\right].
\end{array}
$$
If $a=n$ is a positive integer, $\Gamma_\sigma=
\Z_n$ and the morphism, as is well--known, is an isomorphism, with inverse given by
$$
\begin{array}{cclc}
& \C/\Z_n&\longrightarrow & \C\\
& [z]&\longmapsto & z^n.
\end{array}
$$}
\end{example}
\section{Algebraic Toric quasifolds}\label{toricquasifolds}
We adapt from \cite[Ch.~3.0]{cox} the general construction of abstract toric varieties obtained by gluing together affine varieties.
\subsection{Definition}
 We consider
a triple $(\Sigma,Q,\{v_1,\ldots,v_d\})$, with $\Sigma\subset\R^n$ a simplicial fan with convex support of full dimension $n$ and we define the corresponding abstract toric quasifold by suitably gluing the following collection of affine toric quasifolds:
$$\left\{X^{\tau}_{\gamma}\;|\;\gamma\subset\tau,\quad \tau\;\text{maximal cone in}\;\Sigma\right\}.$$
Note that when we use the inclusion symbol between two cones, here and 
in the following, it is
understood that the first is a face
of the second and
the corresponding affine toric quasifold
is the one given in Remark~\ref{facce}.
We now describe the gluing maps in detail and prove the compatibility conditions.
 
Let $\delta$ and $\gamma$ be two cones of the fan $\Sigma$ of dimension $r$ and $s$ respectively and let $\sigma$ and $\tau$ be any two maximal cones such that $\delta\subset\sigma$ and $\gamma\subset\tau$. 
Suppose for simplicity that the cone $\sigma$ is generated by the first $n$ vectors, $\{v_1,\ldots,v_n\}$, and
that the cone $\tau$ is generated by
$\{v_{i_1},\ldots,v_{i_n}\}$, with
$\{i_1,\ldots,i_n\}\subset\{1,\ldots,d\}$.
The cone $\delta\cap\gamma$ is a face of $\sigma$ and $\tau$. 
Consider the triples
$
(\delta\cap\gamma,Q,\{v_1,\ldots,v_n\})$ and $
(\delta\cap\gamma,Q,\{v_{i_1},\ldots,v_{i_n}\})
$.
The identity of $\R^n$ defines a cone
isomorphism $\delta\cap\gamma\rightarrow\delta\cap\gamma$. According to (\ref{morfismo}), the corresponding affine quasifold
isomorphism
$h_{\gamma\delta}\colon
X^\sigma_{\delta\cap\gamma}\rightarrow
X^\tau_{\delta\cap\gamma}$	
is given by
$$
h_{\gamma\delta}([ z_1:\cdots:z_n ])= \left[\prod_{k=1}^n z_k^{\alpha^{\tau}_{i_1}(v_{k})}:\cdots:\prod_{k=1}^n z_k^{\alpha^{\tau}_{i_n}(v_k)}\right],$$
where $\alpha^{\tau}_{i_1},\ldots,\alpha^{\tau}_{i_n}$ is the basis that is dual to
the basis $v_{i_1},\ldots,v_{i_n}$ of
ray generators for $\tau$.
The mappings $h_{\gamma\delta}$
are our gluing isomorphisms. 

Note that
$\delta\cap\gamma$ is
also a face of $\sigma\cap\tau$.
By Proposition~\ref{restriction},
$h_{\gamma\delta}$ is the restriction  of $h_{\tau\sigma}$ to $X^\sigma_{\delta\cap\gamma}$.
For this reason, in the examples that will follow, we will only be computing the gluing isomorphisms that correspond to 
the intersection of maximal cones.

Before stating the following theorem, recall from Lemma~\ref{faceinclusion} that there is a natural inclusion of the affine toric quasifolds corresponding to the faces of a cone into the one corresponding to the cone itself. 
\begin{thm}\label{gluing}
For any triple $(\Sigma,Q,\{v_1,\ldots,v_d\})$ 
take 
$$\widetilde{X}_\Sigma=\sqcup_{\delta\subset \sigma} X_{\delta}^{\sigma},$$
where $\delta, \sigma\in\Sigma$ and $\sigma$ is maximal. Take 
$y\in X^{\sigma}_{\delta}$ and $y'\in X^{\tau}_{\gamma}$, with $\delta\subset \sigma$, $\gamma\subset\tau$. Then
 $y\sim y'$, if, and only if, $y\in X^{\sigma}_{\delta\cap\gamma}$, $y'\in X^{\tau}_{\delta\cap\gamma}$ and $y'=h_{\gamma\delta}(y)$. The relation $\sim$ is an equivalence relation.
\end{thm}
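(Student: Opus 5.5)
To prove that $\sim$ is an equivalence relation, I will verify reflexivity, symmetry and transitivity directly. Each check reduces to the functoriality of the assignment cone morphism $\mapsto$ affine toric morphism (Theorem~\ref{functoriality}), together with the compatibility of restrictions to faces (Proposition~\ref{restriction}). The key point is that each gluing map $h_{\gamma\delta}$ is induced by the identity of $\R^n$, viewed as a cone isomorphism $\delta\cap\gamma\to\delta\cap\gamma$ between two different triples: one coming from the ray generators of $\sigma$, the other from those of $\tau$. Any composition of gluing maps is therefore again induced by the identity of $\R^n$, and so it coincides with the corresponding gluing map.

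\emph{Reflexivity.} Take $y\in X^\sigma_\delta$ and compare it with itself, so $\gamma=\delta$ and $\tau=\sigma$. Then $\delta\cap\gamma=\delta$, and $h_{\delta\delta}$ is induced by the identity cone morphism with the same triple on both sides. By the last assertion of Theorem~\ref{functoriality}, $h_{\delta\delta}$ is the identity, so $y\sim y$.

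\emph{Symmetry.} Suppose $y'=h_{\gamma\delta}(y)$ with $y\in X^\sigma_{\delta\cap\gamma}$. The composition $h_{\delta\gamma}\circ h_{\gamma\delta}\colon X^\sigma_{\delta\cap\gamma}\to X^\sigma_{\delta\cap\gamma}$ is induced by $\mathrm{id}\circ\mathrm{id}=\mathrm{id}$ with the same triple at both ends. By Theorem~\ref{functoriality} it is the identity, so $h_{\delta\gamma}(y')=y$. Also $y'$ lies in $X^\tau_{\delta\cap\gamma}=X^\tau_{\gamma\cap\delta}$, hence $y'\sim y$.

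\emph{Transitivity.} This is the step needing care. Let $y\in X^\sigma_\delta$, $y'\in X^\tau_\gamma$ and $y''\in X^\mu_\epsilon$, with $y'=h_{\gamma\delta}(y)$ and $y''=h_{\epsilon\gamma}(y')$. The obstacle is to show that $y$ already lies in $X^\sigma_{\delta\cap\epsilon}$ and that, more strongly, all three points lie in the pieces indexed by $\delta\cap\gamma\cap\epsilon$. For this I will use the orbit--cone correspondence. The point $y'$ lies in the quasitorus orbit of $X^\tau_\gamma$ corresponding to some face $\eta\subset\delta\cap\gamma$, namely the face spanned by the $v_i$ with $z_i=0$. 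Since $y'\in X^\tau_{\gamma\cap\epsilon}$, also $\eta\subset\epsilon$. Hence $y'\in X^\tau_{\delta\cap\gamma\cap\epsilon}$.

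I then check that the gluing maps preserve these orbits. By Proposition~\ref{equivariance}, each $h$ is a quasitorus-equivariant map whose restriction to the open orbits is the identity of $\C^n/Q$. Concretely, the exponent matrix $\alpha^\tau_{i_j}(v_k)$ restricted to the coordinates indexed by $\eta$ is a permutation, because the generators of $\eta$ are common to both bases. So $h$ sends the orbit of $\eta$ onto the orbit of $\eta$. It follows that $y\in X^\sigma_{\delta\cap\gamma\cap\epsilon}\subset X^\sigma_{\delta\cap\epsilon}$, and similarly for $y''$.

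Finally, by Proposition~\ref{restriction} each gluing map restricts on $X_{\delta\cap\gamma\cap\epsilon}$ to the map induced by the identity cone isomorphism of $\delta\cap\gamma\cap\epsilon$ between the relevant triples. Theorem~\ref{functoriality} then gives $h_{\epsilon\gamma}\circ h_{\gamma\delta}=h_{\epsilon\delta}$ there, since both sides are induced by the identity of $\R^n$ from the $\sigma$-triple to the $\mu$-triple. Therefore $y''=h_{\epsilon\delta}(y)$, and $y\sim y''$.
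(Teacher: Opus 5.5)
Your proof is correct and uses the same key input as the paper: since every gluing map is induced by the identity of $\R^n$, Theorem~\ref{functoriality} gives both $h_{\delta\gamma}=h_{\gamma\delta}^{-1}$ and the cocycle identity $h_{\epsilon\gamma}\circ h_{\gamma\delta}=h_{\epsilon\delta}$. The paper simply asserts that these identities make $\sim$ an equivalence relation, whereas you also verify the domain condition that transitivity needs (the gluing maps preserve which coordinates vanish, so $y$, $y'$, $y''$ all lie in the pieces indexed by $\delta\cap\gamma\cap\epsilon$), a step the paper leaves implicit.
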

\noindent\begin{preuve}
For all face inclusions $\delta\subset\sigma, \gamma\subset\tau, \epsilon\subset\rho$ we have
\begin{itemize}
\item $h_{\gamma\delta}=h_{\gamma\epsilon}\circ h_{\epsilon\delta }$ 
\item $h_{\delta\gamma}=h^{-1}_{\gamma\delta}$
\end{itemize}
by Theorem~\ref{functoriality}. The compatibility conditions above imply that $\sim$ is an equivalence relation. 
\end{preuve}

This allows us to finally make the following fundamental definition:
\begin{dfn}\begin{rm}
    We call the quotient space $X_\Sigma=\widetilde{X}_\Sigma/\sim$ the   \textit{algebraic toric quasifold
    associated with the triple} 
    $(\Sigma,Q,\{v_1,\ldots,v_d\})$.
    We will say that $n$ is the \textit{dimension} of $X_{\Sigma}$.\end{rm}
\end{dfn}
Notice that, for each $\delta\subset\sigma$, the subset $\{[y]\in X_\Sigma\;|\;y\in X^{\sigma}_{\delta}\}$ of $X_\Sigma$ can be identified with $X_{\delta}^{\sigma}$ via the mapping  $h_{\delta}^{\sigma}([y])=y$, thus locally our $X_\Sigma$ can be interpreted as an affine toric quasifold. 

The fact that the gluing isomorphisms are equivariant allows to define an action of the
quasitorus $\C^n/Q$ on the toric quasifold $X_{\Sigma}$. The quasitorus can then be identified with the  orbit of maximal dimension.
Notice that, as in the classical case, the orbit--cone correspondence for affine quasifolds induces an orbit-cone correspondence for the quasifold  $X_\Sigma$.
\begin{rem}[The quotient construction of toric quasifolds]
\rm{As in the rational case \cite{audin,cox0}, algebraic toric quasifolds can also be constructed as complex quotients \cite{cx}. The link between the two constructions is again the atlas of the complex quotient, whose charts are naturally identified with the algebraic affine quasifolds corresponding to the maximal cones.}
\end{rem}
\subsection{Examples}
\begin{example}[The quasisphere]
\rm{
Let $\Sigma$ be the one--dimensional fan in $\R$ given by the simplicial cones $\sigma=\R_{\geq 0}$, $\tau=\R_{\leq 0}$, and $\sigma\cap\tau=\{0\}$.
Take the vectors $v_1=a$, with $a$ a positive real number, and $v_2=-1$. We call the toric quasifold corresponding to
the triple
$$
(\Sigma, \Z+a\Z, \{v_1,v_2\}).
$$
\textit{quasisphere}. It was
first introduced, in the
symplectic setting, in \cite{pcras, p}; a thorough treatment can be found in \cite{p2}.
Notice that, when $a$ equals a positive integer $n$, it equals $\C\P^1$.
We proceed to describe the affine toric quasifolds corresponding to the cones
$\sigma, \tau$ and gluing isomorphisms.
We have treated the triple
$(\sigma, \Z+a\Z, \{v_1\})$
in Example \ref{quasispheremodeltwo}.
So we consider the triple
$(\tau, \Z+a\Z, \{v_2\})$.
The dual basis to $v_2$ is given by the vector $\alpha^{\tau}_2=-e_1^*$. Then 
$$\Gamma_\tau=
(\Z+a\Z)/\Z=\left\{ \left[h a v_2\right]\mid h\in \Z\right\}$$ acts on $\C$ by 
rotations of angle $a$:
$$\left[ahv_2\right]\cdot w=e^{2\pi i ah}w.$$
Thus $X_{\tau}=\C/\Gamma_{\tau}$ and
$X^{\tau}_{\{0\}}=\C^*/\Gamma_\tau$.
The gluing isomorphisms are given
 by $$h_{\tau\sigma}([z])=[z^{\alpha_2^{\tau}(v_1)}]=[z^{-a}]$$ 
 and $$h_{\sigma\tau}([w])=[w^{\alpha_1^{\sigma}(v_2)}]=[w^{-\frac1a}].$$}
\end{example}

\begin{example}[Generalized weighted projective space]
\label{weightedprojective}
\rm{
Consider any positive real number $a$, and take the complete simplicial fan $\Sigma$ whose rays are generated by the vectors  
 $v_1=(1,0)$, $v_2=(-1,-a)$ and $v_3=(0,1)$
 (see Figure~\ref{triangle}).
 \begin{figure}[ht]
\begin{center}
\includegraphics{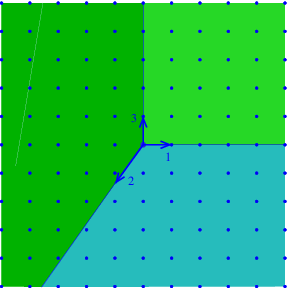}
\caption{The fan for the generalized projective space.}
\label{triangle}
\end{center}
\end{figure}
 The vectors $v_1, v_2, v_3$ span the quasilattice
$$Q=\mathbb{Z}\times (\mathbb{Z}+a\mathbb{Z})\supseteq \mathbb{Z}^2.$$
Consider the triple
$$
(\Sigma, \mathbb{Z}\times (\mathbb{Z}+a\mathbb{Z}), \{v_1, v_2, v_3\}).
$$
We call the corresponding toric quasifold 
\textit{generalized weighted projective space} and we denote it by $\C\P^2_{(1,1,a)}$;
it was first introduced, in the symplectic setting, in \cite{p}. Notice, that for $a=1$ we get $\C\P^2$ while for $a$ equal to a positive integer $n$ we get ordinary weighted projective space $\C\P^2_{(1,1,n)}$.

Let us describe the affine toric quasifolds associated with the maximal cones, together with the corresponding gluing isomorphisms.
Denote by $\sigma$ the maximal cone generated by $v_1, v_3$, by
 $\rho$ the one generated by  $v_2, v_3$, and finally by
 $\tau$ the one generated by $v_1, v_2$ (see Figure~\ref{triangle}). 
Consider the triples:
$$(\sigma, \mathbb{Z}\times (\mathbb{Z}+a\mathbb{Z}), \{v_1, v_3\}),$$
$$(\rho, \mathbb{Z}\times (\mathbb{Z}+a\mathbb{Z}), \{v_2, v_3\}),$$
$$(\tau, \mathbb{Z}\times (\mathbb{Z}+a\mathbb{Z}), \{v_1, v_2\}).$$
Take the first one.
The dual basis of $v_1,v_3$ is $\alpha_1^\sigma=e_1^*,\alpha_3^\sigma=e_2^*$ and
$$\Gamma_\sigma=\{\ [ahv_3] \mid h\in \Z\}
=(\mathbb{Z}\times (\mathbb{Z}+a\mathbb{Z}))/\mathbb{Z}^2 \simeq (\mathbb{Z}+a\mathbb{Z})/\Z.$$
The action of $\Gamma_\sigma$ on $\mathbb{C}^2$ is given by
$$[ahv_3]\cdot(z_1,z_3)=(z_1,e^{2\pi i ah}z_3)$$
and
$$X_\sigma=\mathbb{C}^2/\Gamma_\sigma.$$
Consider now the second triple.
The dual basis of $v_2,v_3$ is $\alpha_2^\rho=-e_1^*,\alpha_3^\rho=-ae_1^*+e_2^*$ and $$\Gamma_\rho=\{\ [ahv_3] \mid h\in \Z\}\simeq (\mathbb{Z}+a\mathbb{Z})/\Z.$$
The action of $\Gamma_\rho$ on $\mathbb{C}^2$ is given by
$$[ahv_3]\cdot(z_2,z_3)=(z_2,e^{2\pi i ah}z_3)$$
and
$$X_\rho=\mathbb{C}^2/\Gamma_\rho.$$
The third triple was already treated in Example~\ref{cono proiettivo 2}. 
Corresponding to the face inclusions
$\sigma\cap\rho\subset\sigma$,
$\sigma\cap\rho\subset\rho$ and relative induced triples, we have $$X_{\sigma\cap\rho}^\sigma=(\mathbb{C}^*\times \mathbb{C})/\Gamma_\sigma,$$
$$X_{\sigma\cap\rho}^\rho=(\mathbb{C}^*\times \mathbb{C})/\Gamma_\rho, $$
$$
h_{\rho\sigma}([z_1:z_3 ])= \left[z_1^{-1}:z_1^{-a}z_3\right],$$
and
$$
h_{\sigma\rho}([z_2:z_3 ])= \left[z_2^{-1}:z_2^{-a}z_3\right].$$
On the other hand, corresponding to the face inclusions
$\sigma\cap\tau\subset\sigma$,
$\sigma\cap\tau\subset\tau$ and relative triples, we have $$X_{\sigma\cap\tau}^\sigma=(\mathbb{C}\times \mathbb{C}^*)/\Gamma_\sigma,$$
$$X_{\sigma\cap\tau}^\tau=(\mathbb{C}\times \mathbb{C}^*)/\Gamma_\tau, $$
$$
h_{\tau\sigma}([z_1:z_3])= \left[z_1z_3^{-\frac{1}{a}}:z_3^{-\frac{1}{a}}\right],$$
and
$$
h_{\sigma\tau}([z_1:z_2 ])= \left[z_1z_2^{-1}:z_2^{-a}\right].$$
One proceeds similarly for the
face inclusions $\rho\cap\tau\subset\rho$,
$\rho\cap\tau\subset\tau$, and gluing isomorphisms.}
\end{example}

\begin{example}[Generalized Hirzebruch surface]
\label{hirzebruch}
\rm{
Consider again any positive real number $a$, and add
the vector $v_4=(0,-1)$ to the vectors 
$v_1=(1,0)$, $v_2=(-1,-a)$, and $v_3=(0,1)$ of the
previous example. 
These four vectors still span the same quasilattice $Q=\mathbb{Z}\times (\mathbb{Z}+a\mathbb{Z})$.
Let now $\Sigma$ be the
complete simplicial fan whose rays are generated by these vectors
(see Figure~\ref{hirze})
\begin{figure}[h]
\begin{center}
\includegraphics{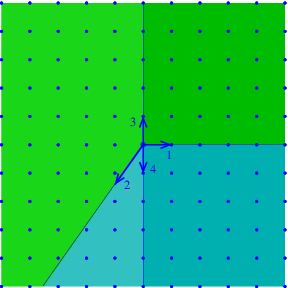}
\caption{The fan for the generalized Hirzebruch surface.}
\label{hirze}
\end{center}
\end{figure}
and consider the triple
$$
(\Sigma, \mathbb{Z}\times (\mathbb{Z}+a\mathbb{Z}), \{v_1, v_2, v_3,v_4\}).
$$
We call the corresponding toric quasifold,
$\H_a$, \textit{generalized Hirzebruch surface}, since, for $a$ equal to a positive integer $n$, this triple yields the standard Hirzebruch surface $\H_n$. 
The toric quasifolds $\H_a$ were first introduced in \cite{hirze} in the symplectic and complex setting, and also from the point of view of foliations. It was also shown there that they form a one--parameter family of toric quasifolds containing the smooth Hirzebruch surfaces $\mathbb{H}_n$.

Take the maximal cones 
$\sigma$, generated by $v_1, v_3$,
$\rho$, generated by  $v_2, v_3$,
$\eta$, generated by  $v_2, v_4$, 
and $\theta$, generated by  $v_1, v_4$.
The affine toric quasifold corresponding to the triples:
$$(\sigma, \mathbb{Z}\times (\mathbb{Z}+a\mathbb{Z}), \{v_1, v_3\}),$$
$$(\rho, \mathbb{Z}\times (\mathbb{Z}+a\mathbb{Z}), \{v_2, v_3\}), $$
and the gluing maps $h_{\rho\sigma}$,
 $h_{\sigma\rho}$
are identical to the ones in the previous
example. Let us focus on the new triples
$$(\eta, \mathbb{Z}\times (\mathbb{Z}+a\mathbb{Z}), \{v_2, v_4\}),$$
$$(\theta, \mathbb{Z}\times (\mathbb{Z}+a\mathbb{Z}), \{v_1, v_4\}).$$
The first was treated in Example~\ref{inclusion}.
For the second, we get that the dual basis of $v_1, v_4$ is 
$\alpha_1^\theta=e_1^*,\alpha_4^\theta=-e_2^*$. The group $$\Gamma_\theta=\{\ [ahv_4] \mid h\in \Z\}\simeq (\mathbb{Z}+a\mathbb{Z})/\Z$$
acts on $\mathbb{C}^2$ by
$$[ahv_4]\cdot(z_1,z_4)=(z_1,e^{2\pi i a h}z_4)$$
and
$$X_\theta=\mathbb{C}^2/\Gamma_\theta.$$
If we now take the face inclusions
$\eta\cap\theta\subset\eta$,
$\eta\cap\theta\subset\theta$, and relative triples, we have
$$X_{\eta\cap\theta}^\eta=(\mathbb{C}^*\times \mathbb{C})/\Gamma_\eta,$$
$$X_{\eta\cap\theta}^\theta=(\mathbb{C}^*\times \mathbb{C})/\Gamma_\theta,$$
$$
h_{\theta\eta}([z_2:z_4])= \left[z_2^{-1}:z_2^a z_4\right],
$$
and
$$
h_{\eta\theta}([z_1:z_4 ])= \left[z_1^{-1}:z_1^a z_4\right].
$$
One proceeds similarly for $\eta\cap\rho$, $\theta\cap\sigma$ and gluing isomorphisms.}
\end{example}
\begin{example}[The Penrose kite]
\rm{
Consider the fifth roots of unity
$$
\begin{array}{l}
u_0=(1,0)\\
u_1=(\cos{\frac{2\pi}{5}},\sin{\frac{2\pi}{5}})=\frac{1}{2}(\frac{1}{\phi},\sqrt{2+\phi})\\
u_2=(\cos{\frac{4\pi}{5}},\sin{\frac{4\pi}{5}})=\frac{1}{2}(-\phi,\frac{1}{\phi}\sqrt{2+\phi})\\
u_3=(\cos{\frac{6\pi}{5}},\sin{\frac{6\pi}{5}})=\frac{1}{2}(-\phi,-\frac{1}{\phi}\sqrt{2+\phi})\\
u_4=(\cos{\frac{8\pi}{5}},\sin{\frac{8\pi}{5}})=\frac{1}{2}(\frac{1}{\phi},-\sqrt{2+\phi}),
\end{array}
$$
where $\phi=\frac{1+\sqrt{5}}{2}$ is the golden ratio, which satisfies the equation
$\phi=1+\frac{1}{\phi}$. These vectors span the pentagonal quasilattice $Q_5$.
Consider now the
complete simplicial fan 
$\Sigma$ whose rays are generated by the vectors $v_1=-u_1$, $v_2=-u_3$, $v_3=u_2$, and $v_4=u_4$
(see Figure \ref{kitefan}). This is the normal fan of the Penrose kite.
 \begin{figure}[ht
 ]
 \begin{center}
\includegraphics{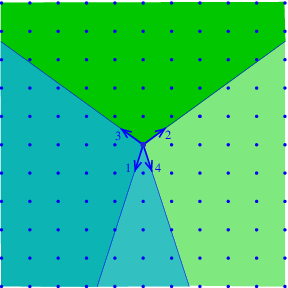}
\caption{The normal fan of the Penrose kite.}
 \label{kitefan}
 \end{center}
\end{figure}
We are interested in the toric quasifold corresponding to the triple
$$
(\Sigma, Q_5, \{v_1, v_2, v_3,v_4\}).
$$
We remark that this toric quasifold was
first introduced and studied in \cite{kite} in the symplectic context. 
As such, it is shown in \cite[Theorem 6.1]{kite} that \textit{it is not} a global quasifold, 
namely not the quotient of a manifold modulo the action of a countable group.

We will focus here on the maximal cone $\sigma$ generated by $v_1, v_4$, the maximal cone $\tau$ generated by $v_2,v_4$, and corresponding gluing maps.
Consider first the
triple $(\sigma, Q_5, \{v_1, v_4\})$. The dual basis of
$v_1, v_4$ is given by $\alpha_1^\sigma=-\phi \,e_1^*-\frac{1}{\sqrt{2+\phi}}\,e_2^*$, $\alpha_4^\sigma=\phi \,e_1^*-\frac{1}{\sqrt{2+\phi}}\,e_2^*$. 
Thus
$$\left\{
\begin{array}{lrrr}
v_2=-&\phi v_1&+& v_4\\
v_3=&v_1&-&\phi v_4,\\
\end{array}
\right.
$$
and we have that
the corresponding affine toric quasifold is given by $$X_\sigma=\mathbb{C}^2/\Gamma_\sigma,$$
with  
$$\Gamma_\sigma=\left\{\ \left[\phi(hv_1+kv_4)\right] \mid h,k \in \Z\right\}$$
acting on $\mathbb{C}^2$ as follows
$$\left[\phi(hv_1+kv_4)\right]\cdot(z_1,z_4)=(e^{2\pi i \phi h} z_1,e^{2\pi i \phi k}z_4).$$
Consider next the
triple $(\tau, Q_5, \{v_2, v_4\})$. The dual basis of
$v_2, v_4$ is given by $\alpha_2^{\tau}=e_1^*+\frac{1}{\phi\sqrt{2+\phi}}\,e_2^*$, $\alpha_4^{\tau}=\frac{1}{\phi}\,e_1^*-\frac{\phi}{\sqrt{2+\phi}}\,e_2^*$.  Therefore
$$\left\{
\begin{array}{lrrr}
v_1=-&\frac{1}{\phi} v_2&+& \frac{1}{\phi}v_4\\
v_3=-&\frac{1}{\phi} v_2&-& v_4,\\
\end{array}
\right.
$$
and we have that 
$$\Gamma_\tau=\left\{\ \left[\frac{1}{\phi}(hv_2+kv_4)\right] \mid h,k \in \Z\right\}$$
The corresponding affine toric quasifold is given by $$X_\tau=\mathbb{C}^2/\Gamma_\tau,$$
with  $\Gamma_{\tau}$
acting on $\mathbb{C}^2$ as follows
$$\left[\frac{1}{\phi}(hv_2+kv_4)\right]\cdot(z_2,z_4)=(e^{2\pi i \frac{h}{\phi}} z_2,e^{2\pi i \frac{k}{\phi}}z_4).$$
Corresponding to the face inclusions
$\sigma\cap\tau\subset\sigma$,
$\sigma\cap\tau\subset\tau$ and relative triples, we have $$X_{\sigma\cap\tau}^\sigma=(\mathbb{C}^*\times \mathbb{C})/\Gamma_\sigma,$$
$$X_{\sigma\cap\tau}^\tau=(\mathbb{C}^*\times \mathbb{C})/\Gamma_\tau, $$
$$
h_{\tau\sigma}([z_1:z_4])= \left[z_1^{-\frac{1}{\phi}}:z_1^{\frac{1}{\phi}}z_4\right],$$
and
$$
h_{\sigma\tau}([z_2:z_4 ])= \left[z_2^{-\phi}:z_2z_4 \right].$$}
\end{example}

\section{Morphisms and blow--ups of algebraic toric quasifolds}\label{morphisms}
We are now ready to define toric morphisms for algebraic toric quasifolds.
All fans are again simplicial with convex support of full dimension.
\subsection{Toric morphisms}
\begin{thm}\label{toricmorphisms}
Let $\Sigma_1\rightarrow \Sigma_2$ be a fan morphism. Then there is an induced map $\Phi\colon X_{\Sigma_1}\rightarrow X_{\Sigma_2}$,
which is equivariant with respect to the
actions of the corresponding quasitori.
\end{thm}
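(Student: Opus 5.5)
The plan is to define $\Phi$ locally on the affine pieces, using the affine toric morphisms of Section~\ref{affinemorphism}, and then show that these local maps respect the gluing relation $\sim$ of Theorem~\ref{gluing}.

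\textbf{Local definition.} Let $f\colon\R^n\to\R^m$ be the linear map defining the fan morphism. For each face inclusion $\delta\subset\sigma$ in $\Sigma_1$, with $\sigma$ maximal, I would choose a cone $\gamma'\in\Sigma_2$ with $f(\delta)\subset\gamma'$. The smallest such cone is well defined: it is the cone of $\Sigma_2$ containing $f(\text{relint}\,\delta)$ in its relative interior, since intersections of cones of $\Sigma_2$ are faces. Call it $\gamma_\delta$. I then fix a maximal cone $\tau\supset\gamma_\delta$. The map $f$ defines a cone morphism $\delta\to\gamma_\delta$ with respect to the induced triples, because $f(Q_1)\subset Q_2$. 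Hence formula (\ref{morfismo}) gives an affine toric morphism $\Phi^{\tau\sigma}_{\gamma_\delta\delta}\colon X^\sigma_\delta\to X^\tau_{\gamma_\delta}$, which is well defined by Theorem~\ref{welldefined}. Composing with the projection to $X_{\Sigma_2}$ gives a map $\widetilde{X}_{\Sigma_1}\to X_{\Sigma_2}$, defined piece by piece.

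\textbf{Independence of $\tau$.} If $\tau'$ is another maximal cone containing $\gamma_\delta$, the two targets $X^\tau_{\gamma_\delta}$ and $X^{\tau'}_{\gamma_\delta}$ are identified in $X_{\Sigma_2}$ by the gluing isomorphism $h$, which is induced by the identity of $\R^m$. By Theorem~\ref{functoriality}, composing with $h$ gives the morphism induced by $\mathrm{id}\circ f=f$ with respect to the triple on $\tau'$. So the two images agree in $X_{\Sigma_2}$.

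\textbf{Compatibility with gluing.} Suppose $y\in X^\sigma_\delta$, $y'\in X^{\sigma'}_{\delta'}$ and $y'=h_{\delta'\delta}(y)$ with $y\in X^\sigma_{\delta\cap\delta'}$. By Proposition~\ref{restriction}, the restriction of $\Phi_{\gamma_\delta\delta}$ to $X^\sigma_{\delta\cap\delta'}$ is the morphism induced by $f$ on the face $\delta\cap\delta'$, with target $\gamma_\delta$ or a face of it. Applying Theorem~\ref{functoriality} to the chain $\delta\cap\delta'\xrightarrow{\mathrm{id}}\delta\cap\delta'\xrightarrow{f}\gamma$, where the identity goes from the $\sigma$-triple to the $\sigma'$-triple, the images of $y$ and $y'$ are both given by the morphism induced by $f$ from the same cone $\delta\cap\delta'$. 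They differ only through the triples on the target side, and those differences are absorbed by the gluing maps $h$ of $\Sigma_2$, again by functoriality. This is the step I expect to be the main obstacle. One must check that the target cones chosen for $\delta$, $\delta'$ and $\delta\cap\delta'$ can be compared inside a single maximal cone of $\Sigma_2$ through face inclusions. I would handle this by always factoring through $\gamma_{\delta\cap\delta'}$, which is a face of both $\gamma_\delta$ and $\gamma_{\delta'}$, and then using the inclusions of Lemma~\ref{faceinclusion} together with Proposition~\ref{restriction}.

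\textbf{Equivariance.} This follows locally from Proposition~\ref{equivariance}. Each local map intertwines the action of $[\exp_{\sigma}(v)]$ with that of $[\exp_{\tau}(f(v))]$. The quasitorus actions on $X_{\Sigma_1}$ and $X_{\Sigma_2}$ are the actions of $\C^n/Q_1$ and $\C^m/Q_2$, glued via equivariant isomorphisms. So $\Phi$ is equivariant with respect to the homomorphism $\C^n/Q_1\to\C^m/Q_2$ induced by $f$, which is well defined because $f(Q_1)\subset Q_2$.
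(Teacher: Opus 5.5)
Your proposal is correct and takes essentially the paper's approach: define $\Phi$ on each affine piece by the affine morphism induced by $f$, check compatibility with the gluing maps using Theorem~\ref{functoriality} and Proposition~\ref{restriction}, and inherit equivariance from Proposition~\ref{equivariance}. The only difference is cosmetic: you fix the smallest target cone $\gamma_\delta$ and check independence of $\tau$ separately, whereas the paper allows arbitrary choices $\delta_2\supset f(\delta_1)$, $\sigma_2\supset\delta_2$ and handles them all with one commutative square over $\delta_1\cap\bar\delta_1\to\delta_2\cap\bar\delta_2$, where $\delta_2\cap\bar\delta_2$ plays the role of your $\gamma_{\delta\cap\delta'}$.
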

\noindent\begin{preuve}
Consider fan triples $(\Sigma_1,Q_1,\{v_1,\ldots,v_d\})$ and $(\Sigma_2,Q_2,\{u_1,\ldots,u_l\})$ 
for $X_{\Sigma_1}$ and $X_{\Sigma_2}$ respectively.
Take $p\in X_{\Sigma_1}$. Assume that $p=[x]$, for
$x\in X_{\delta_1}^{\sigma_1}$, $\delta_1\subset\sigma_1$. 
By definition, there exist a cone
$\delta_2$ in ${\Sigma_2}$ such that $f(\delta_1)\subset\delta_2$,
where $f\colon\R^n\rightarrow\R^m$ is the linear map that defines the fan morphism. Choose now a maximal cone
$\sigma_2$ in $\Sigma_2$ containing $\delta_2$. Then we define
$$\Phi(p)=[\Phi_{\delta_2\delta_1}(x)],$$
where $\Phi_{\delta_2\delta_1}\colon X_{\delta_1}^{\sigma_1} \rightarrow X_{\delta_2}^{\sigma_2}$ is the affine toric quasifold morphism induced by $f$. We want to show that $\Phi$ is well defined. Assume that $p=[\overline{x}]$, 
$\overline{x}\in X_{\overline{\delta}_1}^{\overline{\sigma}_1}$, for another face inclusion ${\overline{\delta}_1}\subset{\overline{\sigma}_1}$. 
Notice that we have $x\in X_{\delta_1\cap{\overline{\delta}_1}}^{\sigma_1}$,
$\overline{x}\in X_{\delta_1\cap{\overline{\delta}_1}}^{\overline{\sigma}_1}$ and $\overline{x}=h_{\overline{\delta}_1\delta_1}(x)$.
As above, take
${\overline{\delta}_2}$ in ${\Sigma_2}$ such that $f({\overline{\delta}_1})\subset{\overline{\delta}_2}$ and choose a maximal cone
${\overline{\sigma}_2}$ in $\Sigma_2$ containing ${\overline{\delta}_2}$.
We then have the following commutative diagram
$$	
\xymatrix{
\delta_1\cap{\overline{\delta}_1}\ar[r]\ar[d]&\delta_2\cap{\overline{\delta}_2}
\ar[d]\\
\delta_1\cap{\overline{\delta}_1}\ar[r]&\delta_2\cap{\overline{\delta}_2}.
}
$$
The vertical arrows correspond to the identity and the horizontal ones to $f$.
This yields, by Theorem~\ref{functoriality}, the commutative diagram
$$	
\xymatrix{
X_{\delta_1\cap{\overline{\delta}_1}}^{\sigma_1}\ar[r]^{\Phi_{\delta_2\delta_1}}\ar[d]_{h_{\overline{\delta}_1\delta_1}}&X_{\delta_2\cap{\overline{\delta}_2}}^{\sigma_2}
\ar[d]^{h_{\overline{\delta}_2\delta_2}}\\
X_{\delta_1\cap{\overline{\delta}_1}}^{\overline{\sigma}_1}\ar[r]^{\Phi_{\overline{\delta}_2\overline{\delta}_1}}&X_{\delta_2\cap{\overline{\delta}_2}}^{\overline{\sigma}_2}}
$$
which proves that $\Phi$ is well defined.
The mapping $\Phi$ is equivariant since on each affine piece it is given by an affine morphism, which is equivariant.
\end{preuve}
\begin{dfn}[Toric quasifold morphism]{\rm A \textit{morphism} between two toric quasifolds $X_{\Sigma_1}$ and $X_{\Sigma_2}$ is any mapping that is induced by a fan morphism.}
\end{dfn}
\begin{thm}\label{toric functoriality}The composition of two toric quasifold morphisms is the morphism induced by the composition of the corresponding fan morphisms. Moreover, the identity map on an algebraic toric quasifold $X_\Sigma$ is induced by the identity map on the corresponding fan. 
\end{thm}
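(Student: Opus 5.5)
The plan is to reduce everything to the affine statement, Theorem~\ref{functoriality}, using the local description of the induced map from the proof of Theorem~\ref{toricmorphisms}. Let $\Sigma_1\rightarrow\Sigma_2$ and $\Sigma_2\rightarrow\Sigma_3$ be fan morphisms given by linear maps $f\colon\R^n\rightarrow\R^l$ and $g\colon\R^l\rightarrow\R^m$. Let $\Phi\colon X_{\Sigma_1}\rightarrow X_{\Sigma_2}$ and $\Psi\colon X_{\Sigma_2}\rightarrow X_{\Sigma_3}$ be the induced maps, and let $\Theta$ be the map induced by $g\circ f$. First I note that $g\circ f$ really is a fan morphism: $(g\circ f)(Q_1)\subset g(Q_2)\subset Q_3$, and if $f(\delta_1)\subset\delta_2$ and $g(\delta_2)\subset\delta_3$, then $(g\circ f)(\delta_1)\subset\delta_3$.

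Now take $p\in X_{\Sigma_1}$ with $p=[x]$, $x\in X^{\sigma_1}_{\delta_1}$. By the construction in Theorem~\ref{toricmorphisms} I choose $\delta_2\supset f(\delta_1)$ with a maximal cone $\sigma_2\supset\delta_2$, so that $\Phi(p)=[\Phi_{\delta_2\delta_1}(x)]$ with $\Phi_{\delta_2\delta_1}(x)\in X^{\sigma_2}_{\delta_2}$. Next I choose $\delta_3\supset g(\delta_2)$ with a maximal cone $\sigma_3\supset\delta_3$, so that $\Psi(\Phi(p))=[\Psi_{\delta_3\delta_2}(\Phi_{\delta_2\delta_1}(x))]$. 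Theorem~\ref{toricmorphisms} already shows that $\Phi$ and $\Psi$ are independent of the representative and of the auxiliary cones, so these specific choices are legitimate.

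For the composition $g\circ f$ we have $(g\circ f)(\delta_1)\subset\delta_3$. Since $\Theta$ is also independent of choices, I may compute it using the same pair $\delta_3\subset\sigma_3$, which gives $\Theta(p)=[\Theta_{\delta_3\delta_1}(x)]$, where $\Theta_{\delta_3\delta_1}$ is the affine morphism induced by $g\circ f$ between the triples attached to $\delta_1\subset\sigma_1$ and $\delta_3\subset\sigma_3$. Theorem~\ref{functoriality}, applied to the cone morphisms $\delta_1\rightarrow\delta_2\rightarrow\delta_3$ with their induced triples, gives $\Psi_{\delta_3\delta_2}\circ\Phi_{\delta_2\delta_1}=\Theta_{\delta_3\delta_1}$, and hence $\Psi\circ\Phi=\Theta$.

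For the identity, I take $f=\mathrm{id}$ and, for $p=[x]$ with $x\in X^{\sigma}_{\delta}$, choose $\delta_2=\delta$ and $\sigma_2=\sigma$. The identity part of Theorem~\ref{functoriality} then shows that the affine map is the identity of $X^\sigma_\delta$, so the induced map sends $p$ to $p$.

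The main subtlety is the freedom of choosing $\delta_2,\sigma_2$ (and $\delta_3,\sigma_3$). The choices must be made consistently so that the middle triple used by $\Phi_{\delta_2\delta_1}$ is the same one used as the source of $\Psi_{\delta_3\delta_2}$; Theorem~\ref{functoriality} requires this. The well-definedness established in Theorem~\ref{toricmorphisms} is exactly what permits these compatible choices, and I would state this explicitly rather than treat it as routine.
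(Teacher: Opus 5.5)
Your proposal is correct and follows the paper's route: the paper's proof says only that the result is an immediate consequence of Theorem~\ref{functoriality}. Your argument spells out that reduction, using the choice-independence from Theorem~\ref{toricmorphisms} to pick compatible intermediate cones $\delta_2\subset\sigma_2$ and $\delta_3\subset\sigma_3$ so that the affine functoriality applies chart by chart.
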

\noindent\begin{preuve} This is an immediate consequence of Theorem~\ref{functoriality}.
\end{preuve}
See \cite{KLMV} for a view of fan and toric morphisms, and their functoriality, in a different nonrational toric framework. 
\begin{dfn}[Toric quasifold isomorphism] 
\rm{An \textit{isomorphism} between two toric quasifolds $X_{\Sigma_1}$ and $X_{\Sigma_2}$ is any mapping that is induced by a fan isomorphism. Whenever this happens, we will say that the two toric quasifolds are \textit{isomorphic}}.
\end{dfn}
\begin{rem}[Quasilattice inclusion]
\rm{Consider a triple
$(\Sigma,Q,\{v_1,\ldots,v_d\})$
and the corresponding toric quasifold
$X_\Sigma$. Take a new quasilattice
$Q'$ containing $Q$
and the toric quasifold
$X'_\Sigma$ corresponding to $(\Sigma,Q',\{v_1,\ldots,v_d\})$.
Let $\Gamma$ be the countable group
$Q'/Q$.
Arguing for each cone of $\Sigma$ as we did in
Remark~\ref{quoziente}, we get a surjective equivariant morphism
$$F\colon X_\Sigma\rightarrow X'_\Sigma,$$
allowing us to present the quotient
$X_\Sigma/\Gamma$ as $X'_\Sigma$.}
\end{rem}
\subsection{Blow--ups}
As in the rational case, fan subdivisions naturally induce morphisms of the corresponding toric quasifolds. Of course now we have more freedom and we can even subdivide a rational fan in an arbitrary direction, using the remark above (see also
\cite[p.~15]{cut}).

An interesting special case of morphisms that are induced by subdivision is provided by blow--ups. We illustrate the construction with a crucial example.

It is well known classically that, for $n$ a positive integer, $\H_n$ is the blow--up of $\C\P^2_{(1,1,n)}$ at its singular point. It will be of no surprise to the reader that
the same holds, in this nonrational framework, when $n$ is replaced by any positive real number $a$ (see \cite{hirze} for a discussion of this in the symplectic and complex context).

\begin{example}[Blowing--up generalized weighted projective space]
\rm{
Notice that the fan in Example~\ref{hirzebruch} was obtained from the one in Example~\ref{weightedprojective} by subdividing the cone $\tau$ along the ray generated by $v_1+v_2$, yielding
the two cones $\eta$, $\theta$ (see Figures~\ref{triangle}, \ref{hirze}). 
The corresponding toric morphism is the identity on all affine
quasifolds, except for the ones induced by the cone morphisms
$\eta\rightarrow \tau$, 
$\theta\rightarrow\tau$,
plus the ones corresponding to all the faces of $\eta$ and $\theta$.
The affine quasifold morphism corresponding
to the first was already described in Example~\ref{inclusion}.
It is easily checked that the second is given by
$$
\begin{array}{cclc}
& X_\theta=\C^2/\Gamma_{\theta}&\longrightarrow & X_\tau=\C^2/\Gamma_{\tau}\\
& [z_1:z_4]&\longmapsto & \left[z_1z_4^{\frac 1a}:z_4^{\frac 1a}\right].
\end{array}
$$
By Proposition~\ref{restriction}, the morphisms corresponding to the faces are obtained from these by restriction. 
Notice that the subsets $\{[z_1:z_4]\in X_{\eta}\;|\; z_4=0\}$ and $\{[z_1:z_4]\in X_{\theta}\;|\; z_4=0\}$ are mapped to $[0:0]\in X_\tau$. Each subset is what classically would be called the divisor corresponding to the ray generated by $v_4$. Notice that it is $\C\P^1$, as in the smooth case.}
\end{example}
\subsection*{Data availability}
No data was used for the research described in the article.

\end{document}